\newcolumntype{C}{>{$}c<{$}}
\newcommand\BZ{\mathbb Z}
\newcommand\BN{\mathbb N}
\newcommand{\cyc}[1]{\langle #1 \rangle}
\def\BHM#1.#2.#3.#4.{{^{#1}_{#3}\mathcal B^{#2}_{#4}}}
\newcommand\comm\curlyvee
\newcommand\cocomm\curlywedge
\theoremstyle{plain}
\newtheorem{thm}{Theorem}[section]
\newtheorem{cor}[thm]{Corollary}
\newtheorem{prop}[thm]{Proposition}
\newtheorem{lem}[thm]{Lemma}
\theoremstyle{definition}
\newtheorem{df}[thm]{Definition}
\newtheorem{example}[thm]{Example}
\theoremstyle{remark}
\newtheorem{rem}[thm]{Remark}
\crefname{lem}{Lemma}{Lemmas}
\crefname{thm}{Theorem}{Theorems}
\crefname{cor}{Corollary}{Corollaries}
\crefname{prop}{Proposition}{Propositions}
\crefname{example}{example}{examples}
\crefname{df}{Definition}{Definitions}
\crefname{equation}{equation}{equations}
\numberwithin{equation}{thm}
\renewcommand\iff{\Leftrightarrow}
\def\clap#1{\hbox to 0pt{\hss#1\hss}}
\newcommand\inv{^{-1}}
\newlist{lemenum}{enumerate}{1}
\setlist[lemenum]{label=\roman*), ref=\textup{\thethm~(\roman*)}}
\title[$FSZ$ properties and wreath products]{Some behaviors of \texorpdfstring{$FSZ$}{FSZ} groups under central products, central quotients, and regular wreath products}
\author{Marc Keilberg}
\email{keilberg@usc.edu}
\begin{document}
\begin{abstract}
We show that any group $G$ with a non-$FSZ_m$ quotient by a central cyclic subgroup also provides a non-$FSZ_m$ group of order $m|G|$ obtained as a central product of $G$ with a cyclic group.  We then construct, for every prime $p>3$ and $j\in\BN$, an $FSZ_{p^j}$ group $F$ such that there is a central cyclic subgroup $A$ with $F/A$ not $FSZ_{p^j}$.  We apply these results to regular wreath products to construct an $FSZ$ $p$-group which is not $FSZ^+$ for any prime $p>3$.  These give the first known examples of $FSZ$ groups that are not $FSZ^+$.  We are also able to prove a few partial results concerning the $FSZ$ properties for the Sylow subgroups of symmetric groups.  In the appendix we enumerate all non-$FSZ$ groups of order $5^7$.
\end{abstract}
\subjclass[2010]{Primary: 20F99; Secondary: 20D15, 16T05}
\keywords{FSZ groups, regular wreath products, p-groups, central products}
\thanks{The author thanks Geoff Mason for conversations about early drafts of related results, and for suggesting that investigating the Sylow subgroups of symmetric groups would be worthwhile.}
\maketitle
\section*{Introduction}
Recent work of \citet{PS16} and the author \citep{K16:monster} has suggested that the $FSZ$ properties for a (perfect) group are completely determined by the $FSZ$ properties of its Sylow subgroups.  A suggestive point of investigation is to consider this connection for the case of the alternating groups $A_n$, or the symmetric groups $S_n$.  These groups were established to be $FSZ$ by \citet{IMM}.  The Sylow $p$-subgroups of $S_n$ are well-known to be described by direct products of iterated regular wreath products of $\BZ_p$ with itself; see \citep{Rot99} for example.  More generally, the Sylow $p$-subgroups of classical simple groups are often given by iterated wreath products with $\BZ_p$ \citep{Weir:ClassicalSylow}.

This paper was born from efforts to understand how well-behaved the $FSZ$ properties of $p$-groups are with respect to regular wreath products with $\BZ_p$, with the goal of determining if the Sylow subgroups of $S_n$ are always $FSZ$.  While this remains an open question in general, we are able to provide some partial results. In \cref{cor:symm-syl-high} we show that $S_{p^j}$ has an $FSZ_{p^{j-1}}$ Sylow $p$-subgroup, and in \cref{thm:Sp3} we show that the Sylow $p$-subgroup of $S_{p^3}$ is $FSZ$.

Along the way, several examples of bad behavior are discovered, including some new constructions of non-$FSZ$ groups.  We also provide in \cref{prop:wreath-condition} a sufficient condition for the $FSZ$ properties to hold for regular wreath products of the form $D\wr_r\BZ_p$ with $D$ a $p$-group.  Our work on such wreath products culminates in \cref{thm:fsz-not-plus}, which provides an $FSZ$ $p$-group of order $p^{(p+1)^2}$ which is not $FSZ^+$ for any prime $p>3$.

The paper is structured as follows.  In \cref{sec:background} we detail the necessary background material, definitions, and notation.  Then in \cref{sec:central} we establish a connection between the $FSZ$ properties of central products with abelian groups and quotients by central subgroups.  In \cref{sec:family} we construct examples of $FSZ_{p^j}$ groups with non-$FSZ_{p^j}$ central quotients and central products.  \Cref{sec:main} is the main section of the paper, and investigates regular wreath products $D\wr_r\BZ_p$ with $D$ a $p$-group.  These investigations are applied in \cref{sec:not-plus} to construct examples of $FSZ$ groups which are not $FSZ^+$, using the groups constructed in the previous sections.  The proof of the $FSZ$ property for these examples is then adapted to show that the Sylow $p$-subgroup of $S_{p^3}$ is $FSZ$.  The appendix enumerates all isomorphism classes of non-$FSZ$ groups of order $5^7$, as determined with \citet{GAP4.8.4}.

\section{Background and notation}\label{sec:background}
All groups are finite.  For a group $G$ and $x\in G$ we let
\[ o(x) = \mbox{order of the element } x. \]  We define $\BN=\{1,2,3,...\}$ to be the set of positive integers.

Given groups $G,H$, subgroups $A\subseteq Z(G)$ and $B\subseteq Z(H)$, and an isomorphism $\phi\colon A\to B$, let $N=\{(a,\phi(a\inv))\in G\times H \ : \ a\in A\}$.  Then we define the central product $G\ast H$ as the quotient group $(G\times H)/N$.  The direct product arises as the special case where $A,B$ are trivial.  In general the isomorphism class of $G\ast H$ depends on $A,B,\phi$.  The specific choices will either always be made explicit, or will not be important, hence our choice to omit these dependencies from the notation.  In an obvious fashion there is an equivalent definition of central product using generators and relations, which we will also use whenever convenient. For our purposes, we will be primarily interested in the case where $A,B,$ and $H$ are all cyclic.  In this case, to define $G\ast H$ it suffices to state a relation $x^m=z$ where $H=\cyc{x}$, $z\in Z(G)$, and $m\in\BN$.

Given a group $G$ and a prime $p$, the regular wreath product $G\wr_r\BZ_p$ is defined as the semidirect product $G^p\rtimes \BZ_p$, where $\BZ_p$ acts by cyclicly permuting the factors of $G^p$.

The $FSZ$ properties for groups were introduced by \citet{IMM}, and were inspired by invariants of representation categories of semisimple Hopf algebras \citep{KSZ2,NS07a,NS07b}.  These invariants and their generalizations have proven extremely useful in a wide range of areas; see \citep{NegNg16} for a detailed discussion and references.

While multiple characterizations of the $FSZ$ properties exist in the literature \citep{IMM,PS16}, for our purposes these properties are concerned with the following sets.
\begin{df}
  For a group $G$, $m\in\BN$, and $u,g\in G$ we define
  \[ G_m(u,g) = \{ a\in G \ : \ a^m = (au)^m = g\}.\]
\end{df}
\begin{rem}
  We note that in the original definitions for these sets, based on the original formulas for the Frobenius-Schur indicators of the Drinfeld double of $G$ \citep{KSZ2,IMM}, one uses $u\inv$ instead of $u$. In this setting an irreducible representation of this Hopf algebra is parameterized by a pair $(u,\eta)$, with $u\in G$ and $\eta$ an irreducible character of $C_G(u)$ \citep{DPR}.  The isomorphism class of such a representation depends only on the conjugacy class of $u$, and the isomorphism class of $\eta$.  The $m$-th indicator of $(u,\eta)$ is then calculated, in our definition, using the sets $G_m(u\inv,g)$.  In principle, then, there is a difference between using $u$ or $u\inv$ in the definition, and either one can be argued to be the more convenient choice.  However, there is a bijection $G_m(u,g)\to G_m(u\inv,g)$ for all $u,g\in G$ and $m\in\BN$ given by $a\mapsto au$.  Since the $m$-th indicator values depend only on the cardinalities of the sets $G_m(u,g)$ \citep{KSZ2}, rather than the sets themselves, there is in fact no loss of convenience in either setting.
\end{rem}
We then define the $FSZ$ properties as follows.
\begin{df}
  Let $m\in\BN$.  We say the group $G$ is $FSZ_m$ if for any $n\in \BN$ with $(n,|G|)=1$ we have
  $|G_m(u,g)|=|G_m(u,g^n)|$ for all $u,g\in G$.

  We say $G$ is $FSZ$ if it is $FSZ_m$ for all $m$.

  Furthermore, we say that $G$ is $FSZ_m^+$ if $C_G(g)$ is $FSZ_m$ for all $g\in G$.  We say $G$ is $FSZ^+$ if it is $FSZ_m^+$ for all $m$.
\end{df}
The fundamental facts of $FSZ$ and $FSZ^+$ groups can be found in \citep{IMM}, as well as many examples and classes of $FSZ^+$ groups.  We record the facts we will use frequently throughout the paper in the following.

\begin{lem}(\citet{IMM})
  \begin{enumerate}
    \item $G_m(u,g)=\emptyset$ if $u\not\in C_G(g)$, and $G_m(u,g)\subseteq C_G(g)$.
    \item Every group is $FSZ_m$ for $m\in \{1,2,3,4,6\}$.
    \item If $G=H\times K$ is a direct product of groups, then
    \[ G_m((u_H,u_K),(g_H,g_K)) = H_m(u_H,g_H)\times K_m(u_K,g_K).\]
    In particular, $G$ is $FSZ_m$ if and only if both $H$ and $K$ are $FSZ_m$.
    \item Every regular $p$-group is $FSZ^+$.
    \item The irregular $p$-group $\BZ_p\wr_r\BZ_p$ is $FSZ^+$.
    \item There are 32 isomorphism classes of (necessarily irregular) non-$FSZ$ groups of order $5^6$.  They are all non-$FSZ_5$ and necessarily have exponent 25, maximal class, and a center of size 5.
    \item The symmetric and alternating groups are all $FSZ^+$.
    \item A group is $FSZ_m^+$ if and only if for all $n\in\BN$ with $(n,|G|)=1$ and $u,g\in G$ with $[u,g]=1$ the sets $G_m(u,g)$ and $G_m(u,g^n)$ are isomorphic permutation modules for $C_G(u,g)$.
  \end{enumerate}
\end{lem}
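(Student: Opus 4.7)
The plan is to treat each item separately, as they differ substantially in depth. Items (1) and (3) are immediate from the definition of $G_m(u,g)$. For (1), the equation $a^m = g$ forces $[a,g]=1$, and similarly $(au)^m = g$ forces $[au,g]=1$, so $u = a\inv(au) \in C_G(g)$. For (3), $m$-th powers in $H\times K$ are computed componentwise, so $G_m((u_H,u_K),(g_H,g_K))$ decomposes as the stated Cartesian product, and taking cardinalities yields the $FSZ_m$ equivalence. Item (2) follows from the observation that $\varphi(m) \le 2$ exactly for $m\in\{1,2,3,4,6\}$; for such $m$ the associated Frobenius-Schur indicators (whose values encode the $|G_m(u,g)|$ via a linear inversion) land in $\BQ$, forcing invariance under the Galois action $g\mapsto g^n$ for $(n,|G|)=1$.

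The main obstacles are items (4), (5), and (7). For (4), I would exploit the characteristic identities of a regular $p$-group relating $(ab)^m$ to $a^m b^m$ modulo controllable commutators, using them to build an explicit $C_G(u,g)$-equivariant bijection $G_m(u,g)\to G_m(u,g^n)$ for each $n$ coprime to $|G|$. For (5), I would pick a normal form for $\BZ_p\wr_r\BZ_p$ and split into cases by the image in the top $\BZ_p$: a nontrivial projection pins the $p$-th power down to a highly constrained element of the base $\BZ_p^p$, and a trivial projection reduces the problem via (3) to $FSZ_m$ for an abelian group. For (7), I would use that centralizers in $S_n$ decompose as direct products of wreath products of cyclic groups with symmetric groups, and then combine (3) with an ad hoc analysis of each such factor, reproducing the approach of \citet{IMM}.

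Finally, item (6) is an exhaustive GAP enumeration: one iterates over the isomorphism classes of groups of order $5^6$, tests the $FSZ_m$ defining equalities, and reads off the structural features (exponent, maximal class, order of center) of the $32$ counterexamples directly from the GAP output. For (8), conjugation by $C_G(u,g)$ preserves $G_m(u,g)$ setwise, so it is a natural $C_G(u,g)$-set; one then observes that the cardinality condition in the definition of $FSZ_m^+$, applied simultaneously over all pairs $(u,g)$, is equivalent by character orthogonality for the associated permutation characters to isomorphism of the permutation $C_G(u,g)$-modules $G_m(u,g)$ and $G_m(u,g^n)$. I expect the real work in a from-scratch proof to lie in (4), where the regular $p$-group machinery must be carefully unwound to produce equivariant bijections rather than mere cardinality equalities.
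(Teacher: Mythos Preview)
The paper gives no proof of this lemma: it is stated purely as a compilation of results attributed to \citet{IMM}, with no argument supplied in the present paper. There is therefore nothing here to compare your proposal against.

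That said, as a sketch of the underlying arguments your outline is largely on target and broadly matches what is done in \citep{IMM}. A couple of remarks. For (8), the mechanism is cleaner than ``character orthogonality'': the permutation character of $C_G(u,g)$ on $G_m(u,g)$ evaluated at $c\in C_G(u,g)$ is exactly $|C_G(c)_m(u,g)|$ (the fixed points of $c$ are the $a\in G_m(u,g)$ commuting with $c$), so equality of all such fixed-point counts is precisely the $FSZ_m$ condition for each centralizer $C_G(c)$, and equality of characters is equivalent to isomorphism of the permutation modules. For (5), note that $\BZ_p\wr_r\BZ_p$ has exponent $p^2$, so $FSZ_m$ is automatic for $m\neq p$ and only $m=p$ requires the case split you describe; the relevant $p$-th power formula is the one recorded later in the paper (and in \citep[Example~4.4]{IMM}). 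For (7), the reduction you describe is indeed what \citet{IMM} carry out, though the ``ad hoc analysis of each such factor'' is the bulk of their Section~7 and is not short.
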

Several additional examples of $FSZ^+$ and non-$FSZ$ groups can be found in \citep{K16:p-examples,K16:monster,K,K2,PS16,Etingof:SymFSZ}.  It has been an open question as to whether or not there exists an $FSZ$ group which is not $FSZ^+$.  We will show that such groups exist in \cref{thm:fsz-not-plus}.

\begin{rem}
We wish to note an important contrast between regular $p$-groups and its parent class of $FSZ$ $p$-groups.  It is well-known that a direct product of regular $p$-groups is not necessarily regular, whereas the $FSZ$ properties are preserved by direct products.  On the other hand, every subquotient of a regular $p$-group is again regular, whereas there seems to be little necessary connection between the $FSZ$ properties of a group and its subquotients.  That $S_n$ is $FSZ^+$ for all $n$ and the existence of non-$FSZ$ groups shows that an $FSZ^+$ group can have many non-$FSZ$ subgroups.  Furthermore, any proper subquotient of a non-$FSZ$ group of order $p^{p+1}$ is necessarily $FSZ^+$.  In \cref{sec:central} we will show that an $FSZ$ (indeed, $FSZ^+$ by \cref{cor:fp1-plus}) group can have a non-$FSZ$ quotient, even if we restrict attention to quotients by central subgroups.  This reflects the general difficulty, both theoretical and computational, of working with the $FSZ$ properties.
\end{rem}

\section{Connecting central products and central quotients}\label{sec:central}
Given that the class of $FSZ_m$ groups and the class of $FSZ_m^+$ groups are both closed under direct products for every $m\in\BN$, one might hope that they are also closed under central products, or at least central products with abelian groups.  By \citet{IMM} the classes of $FSZ_m$ and $FSZ_m^+$ groups are trivially closed under central products for any $m\in\{1,2,3,4,6\}$, since these are the same as the class of all groups.  Unfortunately, this closure property turns out to fail for prime powers $m=p^j$ for any prime $p>3$ and $j\in\BN$, and it is our goal to illuminate the process by which we can construct examples demonstrating this failure.

By using iterated central products it is easy to see that if we want to prove any sort of closure of $FSZ$ groups with respect to central products with abelian groups, then it suffices to assume the abelian group is cyclic.  Indeed, for $H=G\ast C$ with $C$ cyclic it suffices to suppose that $[H:G]=p$ is a prime.  We can also observe that, by the isomorphism theorems, to prove any sort of closure of $FSZ$ groups with respect to quotients by central subgroups it would suffice to consider the case where the central subgroup is cyclic of prime order.

We will soon see that the following sets are intimately connected to determining these closure properties, and the failure thereof.
\begin{df}\label{df:trip-sets}
  Let $G$ be a group, and let $u,g\in G$ and $z\in Z(G)$.  For any $m\in\BN$ we define
  \[ G_m(u,g,z) = \{ a\in G \ : \ a^m = z(au)^m = g\}.\]
\end{df}
We have the trivial equality $G_m(u,g,1)=G_m(u,g)$ in the usual sense, and we easily see that $G_m(u,g,z)\subseteq C_G(g)$ and that $G_m(u,g,z)=\emptyset$ if $[u,g]\neq 1$.  We note that the restriction $z\in Z(G)$ can be relaxed to $z\in C_G(g)$ without changing these properties, but for our purposes we will only be interested in the case of $z\in Z(G)$.

We begin by considering quotients by cyclic central subgroups.
\begin{thm}\label{thm:cent-quot-decomp}
  Let $G$ be a group with $1\neq z\in Z(G)$.  Define $A=\cyc{z}$ and $H=G/A$ with $\pi\colon G\to H$ the canonical epimorphism.  For fixed $m\in\BN$, $u,g\in G$ and any $n\in\BN$ with $(n,|G|)=1$ define
  \[ X_m(u,g,n)= \bigcup_{i=1}^{o(z)}\bigcup_{j=1}^{o(z)} G_m(u,z^{ni}g^n,z^j).\]
  Then $H_m(uA,g^nA)=\pi(X_m(u,g,n))$ and $X_m(u,g,n)A = X_m(u,g,n)$.

  As a consequence, $H$ is $FSZ_m$ if and only if for all $u,g\in G$ and $n\in\BN$ with $(n,|G|)=1$ there is a bijection $X_m(u,g,1)\to X_m(u,g,n)$.
\end{thm}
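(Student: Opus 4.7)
The plan is to trace the membership condition for $H_m(uA, g^n A)$ back to a disjoint union of conditions inside $G$ parameterized by the central ``error terms'' in $A$, and then translate between cardinalities via the fact that preimages of $\pi$ are cosets of $A$.

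First I would unwind: $aA \in H_m(uA, (gA)^n)$ means $a^m A = (au)^m A = g^n A$, i.e., $a^m = z^{\ell} g^n$ and $(au)^m = z^{\ell'} g^n$ for some residues $\ell, \ell'$ modulo $o(z)$. Since $(n, |G|)=1$ forces $(n, o(z))=1$, every residue can be written uniquely as $ni \bmod o(z)$, and given $\ell = ni$ we can likewise write $\ell' = ni - j \bmod o(z)$ for a unique $j$. The condition then becomes $a^m = z^{ni} g^n$ and $(au)^m = z^{-j} a^m$, which is exactly $a \in G_m(u, z^{ni} g^n, z^j)$ in the sense of \cref{df:trip-sets}. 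Taking the union over all $i, j \in \{1, \ldots, o(z)\}$ produces $X_m(u, g, n)$, and applying $\pi$ recovers $H_m(uA, g^n A)$.

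Next I would verify the invariance $X_m(u, g, n) A = X_m(u, g, n)$ using that $z$ is central: for $a \in G_m(u, z^{ni} g^n, z^j)$ and any integer $k$, $(az^k)^m = a^m z^{km}$ and $((az^k)u)^m = (au)^m z^{km}$, so $az^k \in G_m(u, z^{ni+km} g^n, z^j)$; since $(n, o(z))=1$, there is an $i'$ with $ni' \equiv ni+km \pmod{o(z)}$, showing $az^k \in G_m(u, z^{ni'}g^n, z^j) \subseteq X_m(u, g, n)$. Combined with the first step, this yields the cardinality formula $|H_m(uA, g^n A)| = |X_m(u, g, n)|/|A|$.

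For the consequence, $H$ is $FSZ_m$ precisely when $|H_m(uA, gA)| = |H_m(uA, g^n A)|$ for all $u, g \in G$ and all $n$ coprime to $|H|$; by the cardinality formula this is equivalent to $|X_m(u,g,1)| = |X_m(u,g,n)|$, i.e., to the existence of a bijection between these finite sets. The minor wrinkle that the hypothesis posits $(n, |G|)=1$ rather than $(n, |H|)=1$ is handled by a Chinese Remainder lift: given $n$ coprime to $|H|$, pick $n'$ with $n' \equiv n \pmod{|H|}$ and $n' \equiv 1$ modulo every prime divisor of $|G|$ not dividing $|H|$; then $(n', |G|)=1$ and $g^{n'}A = g^nA$ in $H$, so a bijection $X_m(u,g,1)\to X_m(u,g,n')$ delivers the desired cardinality equality for $n$. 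The main obstacle is essentially bookkeeping—keeping the two indices $i, j$ straight modulo $o(z)$ and verifying that the reparameterizations underlying $X_m A = X_m$ and the translation between the two coprimality conditions are valid—with no deep step beyond $(n, o(z))=1$ and a single CRT application.
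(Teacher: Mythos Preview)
Your argument is correct and follows essentially the same route as the paper: unwind the coset condition $aA\in H_m(uA,g^nA)$ into equalities in $G$ up to powers of $z$, reparameterize using $(n,o(z))=1$ to land in the sets $G_m(u,z^{ni}g^n,z^j)$, and then pass to cardinalities via $|X_m|=|A|\cdot|H_m|$. Two minor differences are worth noting. First, you verify $X_m(u,g,n)A=X_m(u,g,n)$ by an explicit computation with $az^k$; the paper instead observes that once the equivalence $aA\in H_m(uA,g^nA)\iff a\in X_m(u,g,n)$ is established, $X_m$ is automatically $\pi^{-1}$ of a subset of $H$ and hence a union of $A$-cosets---so your direct check, while fine, is not needed. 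Second, you flag and resolve via CRT the discrepancy between $(n,|G|)=1$ and $(n,|H|)=1$ in the final equivalence; the paper simply declares that step ``immediate'' without comment, so your treatment here is actually more careful than the original.
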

\begin{proof}
  By definitions we have
  \[ aA\in H_m(uA,g^nA) \iff \forall a_1\in A \, \exists a_2,a_3\in A \left( a^m a_1 = (au)^m a_2 = g^n a_3\right).\]
  Multiplying this last identity by $a_1\inv$ and using that $A=\cyc{z}$ and $(n,|G|)=1$ shows that \[aA\in H_m(uA,g^nA)\iff a\in X_m(u,g,n).\] This establishes the first claim.

  It then immediately follows that $X_m(u,g,n)$ is a union of $A$ cosets.  Therefore $|X_m(u,g,n)| = |A| |H_m(uA,g^nA)|$, and the final equivalence is then immediate.
\end{proof}
\begin{rem}
  Observe that the unions defining $X_m(u,g,n)$ are in fact disjoint unions.  In other words, $G_m(u,z^{ni}g^n,z^j)\cap G_m(u,z^{ni'},z^{j'}) = \emptyset$ for $i\not\equiv i' \bmod o(z)$ or $j\not\equiv j'\bmod o(z)$.
\end{rem}

We can also exhibit the following connection to the sets $G_m(u,g,z)$.
\begin{lem}\label{lem:cent-prod-lem}
  Let $G$ be a group, and $K=G\ast C$ where $C=\cyc{x}$ and the central product is given by the identification $x^m=z\in Z(G)$ for some $m\in\BN$.  Then for this $m$, any $u,g\in G$, any $n\in\BN$ with $(n,|K|)=1$ and $0\leq j<m$ we have a bijection
  \[ K_m(ux^j,g^n) \to \bigsqcup_{i=0}^{m-1} G_m(u,z^{ni}g^n,z^{j}).\]
  In particular,
  \[|K_m(ux^j,g^n)| = \sum_{i=0}^{m-1} |G_m(u,z^{ni}g^n,z^{j})|.\]
\end{lem}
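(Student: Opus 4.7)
The plan is to parametrize elements of $K$ by pairs $(c,i)\in G\times\{0,\dots,m-1\}$ via $b=cx^i$, use the centrality of $x\in K$ to reduce the two defining equations of $K_m(ux^j,g^n)$ to conditions on $c$, and then match the resulting sum over $i$ to the sum over $i'$ on the right-hand side. Since $C=\cyc{x}$ is abelian and $G,C$ commute inside the central product, $x\in Z(K)$; and the conventional identification $x^m=z$ makes $[K:G]=m$, so each $b\in K$ has a unique such decomposition. Using $x^m=z$ and centrality,
\[ b^m = c^m z^i,\qquad (bux^j)^m = (cu)^m z^{i+j}, \]
so $b\in K_m(ux^j,g^n)$ iff $c^m=g^nz^{-i}$ and $(cu)^m=g^nz^{-i-j}$. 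This realizes $K_m(ux^j,g^n)$ as the disjoint union over $i\in\{0,\dots,m-1\}$ of slices
\[ S_i=\{c\in G : c^m=g^nz^{-i},\ (cu)^m=g^nz^{-i-j}\}. \]

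Unwinding \cref{df:trip-sets}, the right-hand slice $T_{i'}=G_m(u,z^{ni'}g^n,z^j)$ equals $\{c\in G:c^m=g^nz^{ni'},\,(cu)^m=g^nz^{ni'-j}\}$, so both $|S_i|$ and $|T_{i'}|$ are values of the single function $f\colon\BZ/o(z)\to\BN$ given by
\[ f(k)=|\{c\in G : c^m=g^nz^k,\ (cu)^m=g^nz^{k-j}\}|, \]
with $|S_i|=f(-i)$ and $|T_{i'}|=f(ni')$. The key observation is that $f(k+m)=f(k)$: because $z\in Z(G)$, the map $c\mapsto cz$ takes the set counted by $f(k)$ bijectively to the set counted by $f(k+m)$, via $(cz)^m=c^mz^m$ and $((cz)u)^m=(cu)^mz^m$. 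Hence $f$ descends through the quotient $\BZ/o(z)/\cyc{m}\cong\BZ/d$, where $d=\gcd(m,o(z))$.

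Since $d\mid m$, as $i$ runs over $\{0,\dots,m-1\}$ the residues $-i\bmod d$ visit each element of $\BZ/d$ exactly $m/d$ times, so
\[ |K_m(ux^j,g^n)|=\sum_{i=0}^{m-1}f(-i)=(m/d)\sum_{r\in\BZ/d}f(r). \]
Because $(n,|K|)=1$ and $d\mid o(z)\mid|K|$, multiplication by $n$ permutes $\BZ/d$, so an identical count gives $\sum_{i'=0}^{m-1}|T_{i'}|=\sum_{i'=0}^{m-1}f(ni')=(m/d)\sum_{r}f(r)$. The two sums agree, which proves the stated cardinality identity and, for finite sets, supplies the desired bijection $K_m(ux^j,g^n)\to\bigsqcup_{i=0}^{m-1}T_i$.

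The main obstacle is the indexing mismatch in the last step: the sum on the right is indexed by $\{0,\dots,m-1\}$, but $f$ depends only on residues modulo $o(z)$, which are visited non-uniformly when $o(z)\nmid m$. The trick $c\mapsto cz$ is what collapses the computation to the coarser invariant modulo $d=\gcd(m,o(z))$, where both $-i$ and $ni$ cover residues the same number of times and the two sums become manifestly equal.
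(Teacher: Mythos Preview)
Your proof is correct, and it shares the paper's starting point: decompose $K$ into $G$-cosets indexed by powers of $x$ and compute $m$-th powers. The difference is in the choice of coset representatives. The paper writes each element of $K$ uniquely as $ax^{-ni}$ with $a\in G$ and $0\le i<m$ (legitimate because $m\mid |K|$ forces $(n,m)=1$), and then a one-line computation gives
\[
ax^{-ni}\in K_m(ux^j,g^n)\iff a\in G_m(u,z^{ni}g^n,z^{j}),
\]
which is already the explicit bijection. Your natural parametrization $b=cx^i$ produces slices $S_i$ that are not literally the target sets $T_{i'}$, so you need the periodicity trick $c\mapsto cz$ to match cardinalities. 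That trick is the genuine extra content of your route; once you have shown $f(k+m)=f(k)$, though, you could already finish without passing to $d=\gcd(m,o(z))$, since both $\{-i\bmod m:0\le i<m\}$ and $\{ni'\bmod m:0\le i'<m\}$ are all of $\{0,\dots,m-1\}$. The paper's reindexing buys an explicit bijection and a shorter argument; your route buys nothing extra here, though the observation $f(k+m)=f(k)$ is a useful one in its own right.
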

\begin{proof}
  By definitions we have for $0\leq i < m$ and $a\in G$ that
  \begin{align*}
    ax^{-ni} \in K_m(ux^j,g^n) &\iff z^{-ni} a^m = z^{-ni}z^{j} (au)^m = g^n\\
    &\iff a^m = z^{j} (au)^m = z^{ni}g^{ni}\\
    &\iff a\in G_m(u,z^{ni}g^n,z^{j}).
  \end{align*}
  The claims now follow.
\end{proof}
\begin{cor}\label{cor:cent-prod}
  Let assumptions be as in the preceding lemma.  If also $o(z)\geq m$ then we have a bijection
  \[ K_m(ux^j,g^n) \to \bigcup_{i=0}^{m-1} G_m(u,z^{ni}g^n,z^{j}).\]
\end{cor}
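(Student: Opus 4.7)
The plan is to deduce this directly from \cref{lem:cent-prod-lem}, since the only difference between the statements is that the disjoint union there has been replaced by an ordinary union here. So the entire content of the corollary is that, under the extra hypothesis $o(z)\geq m$, the sets
\[ G_m(u,z^{ni}g^n,z^j),\qquad i=0,1,\dots,m-1, \]
are already pairwise disjoint as subsets of $G$. Once that is verified, the disjoint union in the lemma collapses to the ordinary union, and the bijection from the lemma gives exactly the bijection claimed here.

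To prove disjointness, I would just take an element $a$ in the intersection of two such sets, indexed by $i$ and $i'$ with $0\leq i,i' <m$, and unpack the definitions. Membership in both sets forces $a^m=z^{ni}g^n=z^{ni'}g^n$, hence $z^{n(i-i')}=1$. Since $n$ is coprime to $|K|$ and $o(z)$ divides $|K|$, $n$ is also coprime to $o(z)$, so this reduces to $z^{i-i'}=1$, i.e.\ $o(z)\mid(i-i')$. Combined with $|i-i'|<m\leq o(z)$, this forces $i=i'$, giving the desired disjointness.

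There is really no obstacle here; the argument is a two-line observation once one notices that the hypothesis $o(z)\geq m$ is exactly what is needed to separate the $m$ potentially distinct values $z^{ni}g^n$ as $i$ ranges over $\{0,1,\dots,m-1\}$. The rest is just invoking the bijection already supplied by \cref{lem:cent-prod-lem}.
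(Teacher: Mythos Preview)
Your proof is correct and follows exactly the same approach as the paper: both observe that the hypothesis $o(z)\geq m$ forces the sets $G_m(u,z^{ni}g^n,z^j)$ for $0\leq i<m$ to be pairwise disjoint, so the disjoint union in \cref{lem:cent-prod-lem} coincides with the ordinary union. The paper's version is terser, asserting disjointness ``by definition,'' whereas you spell out the argument that $z^{n(i-i')}=1$ with $|i-i'|<m\leq o(z)$ forces $i=i'$.
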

\begin{proof}
  The assumption on $o(z)$ guarantees, by definition, that the sets in the union are all pairwise disjoint.
\end{proof}
So we see there is a strong connection between the $FSZ$ properties for central products and quotients by suitable central subgroups.
\begin{thm}\label{thm:quot-to-prod}
  Let $G$ be a group with $1\neq z\in Z(G)$.  Set $A=\cyc{z}$, $H=G/A$.  Let $m\in \BN$ be divisible by $o(z)$.  Define $K=G\ast C$, where $C=\cyc{x}$ and the central product is defined by $x^m=z$.

  Then $H$ is $FSZ_m$ if and only if for all $u,g\in G$ and $n\in\BN$ with $(n,|K|)=1$ there is a bijection \[\bigcup_{j=0}^{m-1} K_m(ux^j,g)\to \bigcup_{j=0}^{m-1} K_m(ux^j,g^n).\]

  As a consequence, if $K$ is $FSZ_m$ then $H$ is $FSZ_m$.  Equivalently, if $H$ is not $FSZ_m$ then $K$ is not $FSZ_m$.
\end{thm}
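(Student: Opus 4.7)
The plan is to relate $|X_m(u,g,n)|$ to the cardinality of $\bigcup_{j=0}^{m-1}K_m(ux^j,g^n)$, and then invoke \cref{thm:cent-quot-decomp}.

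First, I would observe that $K_m(ux^j,g^n)$ depends on $j$ only modulo $o(z)$: since $x$ is central in $K$ and $x^m=z$, the condition $(aux^j)^m=(au)^m z^j$ sees $j$ only through $z^j$. Hence
\[ \bigcup_{j=0}^{m-1} K_m(ux^j,g^n) = \bigcup_{j=0}^{o(z)-1} K_m(ux^j,g^n), \]
and the $o(z)$ sets on the right are pairwise disjoint: an element lying in both $K_m(ux^j,g^n)$ and $K_m(ux^{j'},g^n)$ would force $z^{j-j'}=1$, impossible for distinct $0\leq j,j'<o(z)$.

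Next, I would apply \cref{lem:cent-prod-lem} to each $K_m(ux^j,g^n)$. Since $(n,|K|)=1$ forces $(n,o(z))=1$, and $o(z)\mid m$, the map $i\mapsto ni\bmod o(z)$ on $\{0,\ldots,m-1\}$ is $(m/o(z))$-to-$1$. Combining the lemma with this periodicity gives
\[ |K_m(ux^j,g^n)| = \sum_{i=0}^{m-1}|G_m(u,z^{ni}g^n,z^j)| = \frac{m}{o(z)}\sum_{i=0}^{o(z)-1}|G_m(u,z^{ni}g^n,z^j)|. \]
Summing over $j\in\{0,\ldots,o(z)-1\}$ and using the disjointness of the $G_m$-sets flagged in the remark after \cref{thm:cent-quot-decomp}, I obtain
\[ \Big|\bigcup_{j=0}^{m-1}K_m(ux^j,g^n)\Big| \;=\; \frac{m}{o(z)}\,|X_m(u,g,n)|. \]

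Finally, \cref{thm:cent-quot-decomp} characterizes $H$ being $FSZ_m$ by $|X_m(u,g,1)|=|X_m(u,g,n)|$ for all admissible data; cancelling the common factor $m/o(z)$ identifies this with equality of the $K$-union cardinalities, yielding the stated biconditional (two finite sets are equicardinal iff a bijection exists). For the consequence, if $K$ is $FSZ_m$ then $|K_m(ux^j,g)|=|K_m(ux^j,g^n)|$ for each $j$ (using that $(n,|K|)=1$ implies $(n,|G|)=1$ since $|G|\mid|K|$), and summing over the disjoint index set $j\in\{0,\ldots,o(z)-1\}$ yields the required equality. The only real subtlety is bookkeeping the factor $m/o(z)$ arising from the mismatched index ranges $\{0,\ldots,m-1\}$ in \cref{lem:cent-prod-lem} versus $\{0,\ldots,o(z)-1\}$ in the definition of $X_m(u,g,n)$; once this is tracked correctly the identity is formal.
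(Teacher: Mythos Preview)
Your proof is correct and follows essentially the same route as the paper's: both invoke \cref{lem:cent-prod-lem} to express each $K_m(ux^j,g^n)$ in terms of the sets $G_m(u,z^{ni}g^n,z^j)$, observe that the periodicity modulo $o(z)$ in both indices introduces a common factor of $m/o(z)$, and then appeal to \cref{thm:cent-quot-decomp}. Your version is more explicit---you separate out the reduction of the $j$-range at the start and track the cardinalities directly---whereas the paper compresses this into a single sentence about multiplicities, but the content is the same.
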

\begin{proof}
  In the case that $o(z)=m$ the claims follow from \cref{thm:cent-quot-decomp,cor:cent-prod}.  When $o(z)$ divides $m$ every set in the disjoint union from \cref{lem:cent-prod-lem} occurs the same number of times, and so when applying this bijection to $\bigcup_{j=0}^{m-1} K_m(ux^j,g)$ we see that we obtain every term in $X_m(u,g,z)$ exactly $m/o(z)$ times.  So the result again follows from \cref{thm:cent-quot-decomp}.
\end{proof}

Thus we can show that the class of $FSZ_m$ groups is not closed under central products by showing that it is not closed under quotients by central subgroups.

We know there are non-$FSZ_5$ groups of order $5^6$ by \citet{IMM}, and \citet{GAP4.8.4} has the groups of order $5^7$ in its \verb"SmallGroups" library.  By eliminating cases where central quotients are necessarily $FSZ_5$, we can use GAP to show that of the \numprint{34,297} groups of order $5^7$, there are exactly 83 (isomorphism classes of) $FSZ$ groups of order $5^7$ admitting a non-trivial cyclic subgroup $A$ such that $G/A$ is not $FSZ_5$.  Indeed, all such groups are also $FSZ^+$.  The ids---the value $n$ in \verb"SmallGroup"($5^7$,n)---for these groups are given in \cref{tab:id-nums}.
\begin{table}[ht]
\centering
\caption{ID numbers of $FSZ$ groups of order $5^7$ with non-$FSZ$ central quotients.}
\label{tab:id-nums}
\begin{tabular}{CCCCCCCC}
348& 350& 352& 354&
383& 388& 391& 394\\
397& 453& 458& 463&
467& 472& 477& 530\\
571& 573& 577& 584&
585& 595& 619& 626\\
638& 639& 640& 641&
644& 645& 646& 647\\
654& 660& 667& 700&
701& 702& 703& 709\\
718& 724& 834& 835&
836& 837& 838& 842\\
843& 844& 845& 846&
850& 851& 852& 853\\
854& 858& 859& 860&
861& 862& 866& 867\\
868& 869& 870& 875&
876& 877& 878& 881\\
884& 887& 890& 893&
896& 911& 916& 921\\
926& 931& 936&
\end{tabular}
\end{table}
The following is a specific example that demonstrates this.
\begin{thm}\label{thm:specific}
  Let $G=\verb"SmallGroup"(5^7,348)$ in GAP.  Then $G$ is $FSZ^+$ and there exists $A\subseteq Z(G)$ with $A\cong\BZ_5$ such that $G/A$ is not $FSZ_5$.  There also exists a cyclic group $C$ of order $25$ and a central product $K=G\ast C$ such that $K$ is not $FSZ_5$.
\end{thm}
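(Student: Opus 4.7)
The plan is to reduce the theorem to a direct computational check combined with a single application of \cref{thm:quot-to-prod}. Everything about the internal structure of this particular group of order $5^7$ is purely computational, so the first step is simply to invoke GAP on \verb"SmallGroup"$(5^7,348)$: we verify both that $G$ is $FSZ^+$ (which can be done by checking $FSZ_5$ on each centralizer, since by the background lemma every group is automatically $FSZ_m$ for $m\in\{1,2,3,4,6\}$ and since $|G|=5^7$, only the indicator at $m=5$ can possibly fail), and that $G$ admits a central subgroup $A\cong\BZ_5$ whose quotient $G/A$ is not $FSZ_5$. This is precisely the content of the enumeration recorded in \cref{tab:id-nums}, where $348$ appears as the first listed id.

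Having obtained such an $A=\cyc{z}$ with $o(z)=5$, the second half of the statement is immediate from \cref{thm:quot-to-prod}. We apply that theorem with $m=5$, so that $o(z)=m$ as required. Let $C=\cyc{x}$ be cyclic of order $25$, and form the central product $K=G\ast C$ defined by the identification $x^5=z$. Then \cref{thm:quot-to-prod} asserts that if $K$ were $FSZ_m$ then $H=G/A$ would also be $FSZ_m$; contrapositively, since $G/A$ is not $FSZ_5$, the group $K$ cannot be $FSZ_5$ either.

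The only real work, therefore, is the GAP verification; the structural part is a one-line application of the already-proved theorem. The main obstacle is purely computational efficiency: naively checking the $FSZ^+$ condition on $G$ requires looping over pairs $(u,g)$ with $[u,g]=1$ in every centralizer, which is expensive for $|G|=5^7$. One can cut this down considerably by first restricting to representatives $u$ of conjugacy classes in $G$, then to representatives $g$ of conjugacy classes in $C_G(u)$, and by exploiting the fact that only the prime $5$ needs to be examined for $n$ coprime to $|G|$. With those reductions the computation is entirely feasible. Once the $FSZ^+$ property of $G$ and the non-$FSZ_5$ property of $G/A$ are verified in GAP, the theorem is established.
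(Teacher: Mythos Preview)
Your approach is essentially the paper's own: verify the $FSZ^+$ property of $G$ and the non-$FSZ_5$ property of a suitable central quotient $G/A$ by direct computation in GAP, and then deduce the existence of the non-$FSZ_5$ central product $K=G\ast C$ from \cref{thm:quot-to-prod}. The paper does exactly this, even supplying explicit GAP code and the particular central subgroup $A=\cyc{G.6\cdot G.7}$.

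There is one genuine slip in your justification, however. You claim that ``only the indicator at $m=5$ can possibly fail'' because $|G|=5^7$ and every group is $FSZ_m$ for $m\in\{1,2,3,4,6\}$. That inference is wrong: for a $5$-group the $FSZ_m$ properties that are not automatic are those with $m$ a nontrivial power of $5$, not just $m=5$. A group of order $5^7$ can have exponent $25$ or larger, so in principle $FSZ_{25}$ (and higher) must also be checked on each centralizer before you may conclude $FSZ^+$. The paper sidesteps this by simply invoking \verb"FSZtest", which tests all relevant $m$ at once; your proposed shortcut of checking only $FSZ_5$ would leave a gap. The fix is easy---either quote an exponent computation for the centralizers or, as the paper does, run the full indicator test---but as written your argument for $FSZ^+$ is incomplete.
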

\begin{proof}
  The last claim follows from \cref{thm:quot-to-prod}, though it can also be constructed and tested in GAP directly.  We will use \citet{GAP4.8.4} and the \verb"FSZtest" function of \citet{PS16} to establish the claims.

  \begin{lstlisting}
    G := SmallGroup(5^7,348);
    Center(G);
    A := Subgroup(G,[G.6*G.7]);

    H := G/A;

    FSZtest(G);
    FSZtest(H);
  \end{lstlisting}
  The \verb"FSZtest" function demonstrates that $G$ is $FSZ$ as desired, and that $H$ is not $FSZ$, and so must necessarily be non-$FSZ_5$.  That $G$ is in fact $FSZ^+$ can be obtained by running \verb"FSZtest" over all suitable centralizers in $G$.
  \begin{lstlisting}
    cl := RationalClasses(G);;
    cl := List(cl,Representative);;
    cl := Filtered(cl,x->not x in Center(G));;

    ForAll(cl,x->FSZtest(Centralizer(G,x)));
  \end{lstlisting}
  This returns true, which completes the proof.
\end{proof}

\section{A family of \texorpdfstring{$FSZ$}{FSZ} groups with non-\texorpdfstring{$FSZ$}{FSZ} central quotients}\label{sec:family}
In this section we show how to obtain an infinite family of examples similar to the one from \cref{thm:specific}.  For this, we will use the groups $S(p,j)$ from \citep{K16:p-examples}.  It was shown in \citep[Theorem 1.9]{K16:p-examples} that $S(p,j)$ was not $FSZ_{p^j}$ for any prime $p>3$ and $j\in\BN$.  We will construct an $FSZ_{p^j}$ group which has $S(p,j)$ as one of its quotients by a central subgroup.  This construction will strongly mirror the one for $S(p,j)$.

So fix a prime $p$ and $j\in\BN$.  Consider the abelian $p$-group
\[ Q_{p,j} = \BZ_{p^{j+1}} \times \BZ_p^{p^j-1}.\]
Note for $p>2$ that $Q_{p,j}=P_{p,j}\times \BZ_p$ as defined in \citep{K16:p-examples}; $P_{p,j}$ was not explicitly defined for $p=2$ for technical reasons, but otherwise makes sense with $P_{2,1}=\BZ_4$.  Let $Q_{p,j}$ have generators $a_1,...,a_{p^j}$, where $a_1$ has order $p^{j+1}$ and the rest have order $p$.

We define an endomorphism $b_{p,j}$ of $Q_{p,j}$ by
\begin{align*}
  a_1 &\mapsto a_1 a_2\inv\\
  a_k &\mapsto a_k a_{k+1}, \ 2\leq k < p^j\\
  a_{p^j} &\mapsto a_{p^j}.
\end{align*}
We can equivalently write $b_{p,j}$ as a $p^j\times p^j$ lower triangular matrix $B_{p,j}$ which acts from the left on $Q_{p,j}$ in the obvious fashion.  The entries in the first row of $B_{p^j}$ are taken modulo $p^{j+1}$, while all other entries are taken modulo $p$.  We have
\begin{align*}
  B_{p,j} &= \begin{pmatrix}
    1 & 0 & 0 &\cdots & 0 & 0 &0\\
    -1 & 1 & 0 &\cdots & 0 & 0 &0\\
    0& 1 & 1 & \cdots & 0 & 0 & 0\\
    &\vdots&&&&\vdots&\\
    0&0&0&\cdots & 1&1&0\\
    0&0&0&\cdots&0&1&1
  \end{pmatrix}.
\end{align*}
The entries of $B_{p,j}^k$ for $1\leq k < p^j$ are then naturally described by binomial coefficients, or equivalently the entries of Pascal's triangle.
\begin{lem}\label{lem:b-is-aut}
  The endomorphism $b_{p,j}$ is an automorphism of order $p^j$.
\end{lem}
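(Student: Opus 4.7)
The plan is to verify two claims: that $b_{p,j}^{p^j} = \id$, so the order of $b_{p,j}$ divides $p^j$; and that $b_{p,j}^{p^{j-1}} \neq \id$, so the order equals $p^j$. An endomorphism of a finite group that has finite order is automatically invertible, so this will also give that $b_{p,j}$ is an automorphism.

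I would begin by writing $B_{p,j} = I + N$, where $N$ is the strictly lower-subdiagonal matrix with $N_{2,1} = -1$, $N_{i+1,i} = 1$ for $2 \leq i < p^j$, and zero elsewhere. A direct induction, tracking the unique nonzero product along each subdiagonal path, shows that $N^k$ is supported on the $k$-th subdiagonal with $(N^k)_{k+1,1} = -1$ and $(N^k)_{k+i,i} = 1$ for $2 \leq i \leq p^j - k$. Two consequences matter: (1) $N^{p^j} = 0$, so $I$ and $N$ commute and the binomial expansion is finite; and (2) for every $k \geq 1$ the matrix $N^k$ has zero first row and is supported in rows $2, \ldots, p^j$, which are precisely the rows of $Q_{p,j}$ read modulo $p$.

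By the binomial theorem,
\[ B_{p,j}^m = I + \sum_{k=1}^{\min(m,\, p^j-1)} \binom{m}{k} N^k. \]
For $m = p^j$ the standard estimate $v_p\!\binom{p^j}{k} = j - v_p(k) \geq 1$ for $1 \leq k \leq p^j - 1$ forces every $\binom{p^j}{k} N^k$ to vanish entrywise modulo $p$; combined with the fact that the first row of each $N^k$ is zero, this gives $B_{p,j}^{p^j} = I$ in the appropriate sense, so $b_{p,j}^{p^j}(a_i) = a_i$ for every generator. For $m = p^{j-1}$ I would exhibit the $(p^{j-1}+1,\, 1)$ entry of $B_{p,j}^{p^{j-1}}$: it receives a contribution only from the $k = p^{j-1}$ term, and this equals $\binom{p^{j-1}}{p^{j-1}} \cdot (N^{p^{j-1}})_{p^{j-1}+1,\, 1} = -1$. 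This entry lies in row $\geq 2$ and is read modulo $p$, where $-1 \not\equiv 0$; so $b_{p,j}^{p^{j-1}}(a_1) \neq a_1$.

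Combining the two bounds gives $o(b_{p,j}) = p^j$ exactly, and finite order implies $b_{p,j}$ is an automorphism. The main bookkeeping obstacle is the mixed moduli (row $1$ lives in $\BZ_{p^{j+1}}$, rows $\geq 2$ in $\BZ_p$), but since $N$ already has zero first row, every $N^k$ with $k \geq 1$ contributes only in rows read mod $p$, which neatly sidesteps the issue.
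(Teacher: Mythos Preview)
Your proposal is correct and follows essentially the same approach as the paper: write $B_{p,j}=I+N$ with $N$ strictly lower triangular (the paper calls it $S$), expand $(I+N)^{p^j}$ by the binomial theorem, use nilpotence $N^{p^j}=0$ together with $p\mid\binom{p^j}{k}$ and the vanishing first row of $N^k$ to get $B_{p,j}^{p^j}=I$, and then exhibit a $-1$ in the first column of a lower power to rule out smaller order. The only cosmetic difference is that the paper notes the $(i{+}1,1)$ entry of $B_{p,j}^i$ equals $-1$ for every $1\le i<p^j$, whereas you check just $i=p^{j-1}$; since the order must be a power of $p$ dividing $p^j$, your single check suffices.
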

\begin{proof}
  The proof is nearly identical to \citep[Lemma 1.1]{K16:p-examples}, so we only sketch the proof.  By writing $B_{p,j}=I+S$ where $I$ is the identity matrix, we can use the binomial theorem to expand \[(I+S)^{p^j} = I + S^{p^j} + \sum_{k=1}^{p^{j}-1} \binom{p^j}{k} S^k.\]  The powers of $S$ are easily computed, and $S$ is readily observed to be nilpotent with $S^{p^j}=0$.  Moreover, since each binomial coefficient in the summation is a multiple of $p$ and the first row of $S$ is the zero vector, we conclude that every term in the summation is the zero matrix.  Finally, the first column of every power $B_{p,j}^i$ for $1\leq i < p^j$ contains an entry of $-1$, and is $B_{p,j}^i$ therefore not the identity matrix.  This completes the proof.
\end{proof}

\begin{df}
  Let $p$ be a prime and $j\in\BN$.  Define a group of order $p^{p^j+2j}$ by
  \[ F(p,j) = Q_{p,j}\rtimes \cyc{b_{p,j}}.\]
\end{df}
By considering the (right) $1$-eigenvectors of $B_{p,j}$ it is not difficult to see that $Z(F(p,j)) = \cyc{a_1^{p}, a_{p^j}}\cong \BZ_{p^j}\times\BZ_p$.  We identify $Q_{p,j}$ and $\cyc{b_{p,j}}$ as subgroups of $F(p,j)$ in the usual fashion.

We then relate the group $F(p,j)$ to $S(p,j)$ in the following manner.
\begin{prop}\label{prop:has-bad-quot}
  Suppose $p$ is an odd prime, and let $G=F(p,j)$ be as above. Define $A=\cyc{a_1^{p^j} a_{p^j}}\subseteq Z(G)$.  Then $G/A\cong S(p,j)$, as defined in \citep{K16:p-examples}.
\end{prop}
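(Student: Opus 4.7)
The plan is to construct an explicit isomorphism $F(p,j)/A \to S(p,j)$ by using the relation $a_1^{p^j} a_{p^j} = 1$ in the quotient to eliminate $a_{p^j}$, and then verifying that the resulting quotient of $Q_{p,j}$ is $P_{p,j}$ and that the induced action of $b_{p,j}$ matches the defining automorphism of $S(p,j)$. Throughout, $p$ odd will be used only to guarantee $P_{p,j}$ is well-defined as in \citep{K16:p-examples}.

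First I would check that $A$ is a central subgroup of order $p$ that is stable under $b_{p,j}$. Since $a_1^{p^j} = (a_1^p)^{p^{j-1}}$ lies in $\cyc{a_1^p}$ and $a_{p^j}$ is a direct factor generator, both belong to $Z(F(p,j)) = \cyc{a_1^p, a_{p^j}}$; each has order $p$ in an independent direct factor of $Q_{p,j}$, so their product also has order $p$. Stability is a direct computation:
\[
b_{p,j}(a_1^{p^j} a_{p^j}) = (a_1 a_2^{-1})^{p^j} a_{p^j} = a_1^{p^j} a_2^{-p^j} a_{p^j} = a_1^{p^j} a_{p^j},
\]
using abelianness of $Q_{p,j}$ and the fact that $o(a_2) = p$ divides $p^j$. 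In particular, $A$ is fixed pointwise by $b_{p,j}$, so $b_{p,j}$ descends to an automorphism of $Q_{p,j}/A$, whose order remains $p^j$ by \cref{lem:b-is-aut} together with a rank check on $\bar b_{p,j} - \id$.

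Next, the relation $a_{p^j} = a_1^{-p^j}$ in $Q_{p,j}/A$ eliminates the last generator, leaving generators $a_1, \ldots, a_{p^j - 1}$ with $a_1$ of order $p^{j+1}$ and the remainder of order $p$. This is precisely $P_{p,j} = \BZ_{p^{j+1}} \times \BZ_p^{p^j - 2}$ from \citep{K16:p-examples}. Under this identification the induced automorphism acts as $b_{p,j}$ on $a_1, \ldots, a_{p^j - 2}$ and sends
\[
a_{p^j - 1} \mapsto a_{p^j - 1} a_{p^j} = a_{p^j - 1} a_1^{-p^j}.
\]
This is exactly the defining automorphism of $S(p,j)$ in \citep{K16:p-examples}, so the semidirect product decomposition $F(p,j)/A = (Q_{p,j}/A) \rtimes \cyc{\bar b_{p,j}}$ gives the required isomorphism with $S(p,j)$; an order count ($p^{p^j + 2j - 1}$ on each side) serves as a sanity check.

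The main obstacle is purely bookkeeping: matching the generator labels and the sign convention on the "wrap-around" $a_{p^j - 1} \mapsto a_{p^j-1} a_1^{-p^j}$ with the construction of $S(p,j)$ in the referenced paper. Since $F(p,j)$ has been designed so that the off-diagonal entries of $B_{p,j}$ exactly mirror those defining $S(p,j)$ with one extra "slack" generator $a_{p^j}$, once the identification $a_{p^j} \leftrightarrow a_1^{-p^j}$ is fixed the isomorphism is forced by the universal property of the semidirect product.
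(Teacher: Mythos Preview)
Your proof is correct and follows essentially the same approach as the paper: quotient $Q_{p,j}$ by $A$ to obtain $P_{p,j}$, then verify that the induced action of $b_{p,j}$ coincides with the defining automorphism of $S(p,j)$. The paper's proof is a two-sentence sketch of exactly this, and you have simply filled in the bookkeeping details (centrality and order of $A$, the explicit wrap-around relation $a_{p^j-1}\mapsto a_{p^j-1}a_1^{-p^j}$, and the order count).
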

\begin{proof}
  Taking $Q_{p,j}$ modulo $A$ yields $P_{p,j}$ in a natural way, and rewriting the definitions of our $b_{p,j}$ modulo $A$ yields exactly the automorphism of $P_{p,j}$ that defines $S(p,j)$ (in the same natural way).
\end{proof}
From now on, whenever convenient we will omit the subscripts from $b_{p,j}$, $Q_{p,j}$, and $B_{p,j}$.

To describe $p^j$-th powers in $F(p,j)$, we need to consider the sums over all elements of $\cyc{B^{p^l}}$ for any $0\leq l \leq j$ and their action on $Q$.  By definition this matrix sum is independent of the choice of generator of the cyclic group in question.  Explicitly, we define for $0\leq l \leq j$
\begin{align} X_{p,j}(p^l) = \sum_{t=1}^{p^{j-l}} B^{tp^l}.\end{align}
Note that $X_{p,j}(p^j)$ is the identity matrix.

\begin{rem}
  The $X_{p,j}(p^l)$ play the same role that the $Y_{p,j}(p^l)$ played in \citep{K16:p-examples}.  The next few lemmas mirror the corresponding lemmas for the $Y_{p,j}(p^l)$.
\end{rem}

\begin{lem}\label{lem:x-1}
  The matrix $X_{p,j}(1)$ is given by
  \[ X_{p,j}(1).\prod_i a_i^{n_i} = a_1^{p^j n_1} a_{p^j}^{-n_1}.\]
\end{lem}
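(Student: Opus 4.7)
The plan is to expand $X_{p,j}(1)$ as a polynomial in the nilpotent part of $B$ and then reduce the resulting binomial coefficients modulo the appropriate orders for each row. Following the decomposition in the proof of \cref{lem:b-is-aut}, write $B = I + S$ where $S$ is strictly lower triangular with $S^{p^j}=0$. Combining the binomial theorem with the hockey-stick identity $\sum_{t=k}^{p^j}\binom{t}{k}=\binom{p^j+1}{k+1}$ for $k\geq 1$, together with the elementary $k=0$ computation $\sum_{t=1}^{p^j} 1 = p^j$, I would obtain
\[ X_{p,j}(1) \;=\; \sum_{t=1}^{p^j}(I+S)^t \;=\; p^j\cdot I \;+\; \sum_{k=1}^{p^j-1}\binom{p^j+1}{k+1}S^k, \]
the upper limit reflecting $S^{p^j}=0$.

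Next I would record how each power $S^k$ acts on the generators directly from the definition of $b_{p,j}$, interpreting $S=B-I$ multiplicatively as $S.a = b_{p,j}(a)a^{-1}$: $S.a_1 = a_2^{-1}$, $S.a_i = a_{i+1}$ for $2\leq i < p^j$, and $S.a_{p^j}=1$. Iterating yields $S^k.a_1 = a_{k+1}^{-1}$ for $1\leq k\leq p^j-1$, and for $i\geq 2$, $S^k.a_i = a_{i+k}$ whenever $i+k\leq p^j$ (otherwise $1$). Substituting these into the display above reduces the lemma to evaluating the binomial coefficients $\binom{p^j+1}{k+1}$ in the correct modulus, noting that entries of $X_{p,j}(1)$ in rows $2,\ldots,p^j$ live in $\BZ_p$, whereas entries in row $1$ are taken modulo $p^{j+1}$.

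The combinatorial core is Lucas's theorem applied to the base-$p$ expansion $p^j+1 = (10\cdots01)_p$: this shows $\binom{p^j+1}{k+1}\not\equiv 0\pmod p$ exactly when $k+1\in\{1,p^j,p^j+1\}$. In the range $1\leq k\leq p^j-1$, only $k=p^j-1$ survives, contributing $\binom{p^j+1}{p^j}=p^j+1\equiv 1\pmod p$. Hence after reduction the only nonzero terms are: $p^j\cdot I.a_1 = a_1^{p^j}$, which is nontrivial because $a_1$ has order $p^{j+1}$; the surviving $S^{p^j-1}$ term on $a_1$, which yields $a_{p^j}^{-1}$; and nothing else, since $p^j\cdot I.a_i = a_i^{p^j} = 1$ for $i\geq 2$ (as $a_i$ has order $p$), and every other $\binom{p^j+1}{k+1}S^k.a_i$ vanishes either by Lucas or because $i+k>p^j$. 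So $X_{p,j}(1).a_1 = a_1^{p^j}a_{p^j}^{-1}$ and $X_{p,j}(1).a_i = 1$ for $i\geq 2$, and extending multiplicatively across $\prod_i a_i^{n_i}$ gives the stated formula. The main thing to keep straight is the mixed moduli: the $p^j\cdot I$ term is invisible on rows $2,\ldots,p^j$ but alive on row $1$, and this is precisely what allows $a_1^{p^j}$ to appear in the output while the analogous contributions from the other generators disappear.
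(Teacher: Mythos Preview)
Your proof is correct. The expansion via the hockey-stick identity, the computation of $S^k$ on generators, and the Lucas-theorem reduction all go through, and you are careful about the mixed moduli on row~1 versus rows~$2,\dots,p^j$.

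The paper takes a different, more structural route. Rather than expanding $X=X_{p,j}(1)$ as a polynomial in $S$, it observes that $X$ satisfies $BX=X$ and $XB=X$. The first identity forces each column of $X$ to lie in the right $1$-eigenspace of $B$, which (in the mixed-modulus sense) consists of vectors supported on coordinates $1$ and $p^j$; the second forces each row to lie in the left $1$-eigenspace, which is spanned by $(1,0,\dots,0)$. Together these pin $X$ down to a matrix whose only possibly nonzero column is the first, with entries only in positions $1$ and $p^j$; the actual values $p^j$ and $-1$ then drop out from the diagonal sum and a single binomial count. Your approach trades this eigenvector bookkeeping for explicit combinatorics: it is longer but entirely self-contained, and the same machinery (Lucas plus hockey-stick) would let you read off any individual entry of $X_{p,j}(p^l)$ as well, whereas the paper's eigenvector trick is tailored to the full sum over $\langle B\rangle$.
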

\begin{proof}
  Let $X$ be shorthand for $X_{p,j}(1)$.  Now $X$ satisfies the identity $BX=X$, meaning that the columns of $X$ are 1-eigenvectors of $B$.  These are easily seen to be multiples of $(1,0,...,0,-1)$.  It therefore suffices to show that all columns but the first of $X$ must be zero.  For this, note that $X$ also satisfies $XB=X$, so that the row vectors of $X$ are left 1-eigenvectors of $B$.  These are easily seen to all be multiples of $(1,0,...,0)$, which gives the desired claim.
\end{proof}

\begin{lem}\label{lem:x-L}
  The matrices $X_{p,j}(p^l)$ for $1\leq l \leq j$ all satisfy
  \[ p^l X_{p,j}(p^l).\prod_i a_i^{n_i} = a_1^{p^j n_l}.\]
\end{lem}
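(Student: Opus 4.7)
The plan is to mimic the proof of \cref{lem:x-1}, replacing $B$ by $C := B^{p^l}$. Writing $X := X_{p,j}(p^l)$, I would first observe that $X$ is a ``norm'' sum over the cyclic subgroup $\cyc{C}$ of order $p^{j-l}$, and so telescoping yields
\[ (C-I)X = \sum_{t=1}^{p^{j-l}}\bigl(B^{(t+1)p^l}-B^{tp^l}\bigr) = B^{p^j+p^l}-B^{p^l} = 0, \]
using $B^{p^j}=I$ from \cref{lem:b-is-aut}; the same identity gives $X(C-I)=0$. Consequently, every column of $X$ is a right $1$-eigenvector of $C$ on $Q$ and every row is a left $1$-eigenvector.

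Next I would analyze the $1$-eigenspace of $C$ on $Q$ concretely. Writing $B=I+S$ with $S$ strictly subdiagonal, the binomial expansion $C-I = \sum_{k=1}^{p^l}\binom{p^l}{k}S^k$, combined with the classical valuation $v_p\binom{p^l}{k}=l-v_p(k)$ and the explicit support of each $S^k$ (which is concentrated on the $k$-th subdiagonal, with nonzero entries only in column $1$ and columns $2,\ldots,p^j-k$), produces sharp congruence restrictions modulo $p$ on any $1$-eigenvector. Multiplying by $p^l$ then gives the key simplification: since coordinates $2,\ldots,p^j$ of $Q$ are taken modulo $p$ and $p\mid p^l$, every entry of $p^lX$ lying below the first row contributes nothing to the image of $\prod_i a_i^{n_i}$. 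Hence $p^lX\cdot\prod_i a_i^{n_i}\in\cyc{a_1^{p^j}}$, and the problem reduces to computing the first row of $p^lX$ modulo $p^{j+1}$.

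Finally I would evaluate that first row: the $(1,m)$-entry of $X$ is $\sum_{t=1}^{p^{j-l}}(B^{tp^l})_{1,m}$, which can be unpacked from the binomial expansion $B^{tp^l}=\sum_k\binom{tp^l}{k}S^k$ and the hockey-stick identity, exactly as in the proof of \cref{lem:x-1}. The main obstacle is this last step: one must show that, after multiplication by $p^l$ and reduction modulo $p^{j+1}$, the only surviving contribution is $p^j$ in the entry corresponding to $n_l$, with all other entries vanishing. This is the subtle part of the argument, since the $p$-adic cancellations among the binomial coefficients $\binom{tp^l}{k}$ as $t$ runs over $1,\ldots,p^{j-l}$ depend finely on the structure of $B$, the vanishing $S^k=0$ for $k\geq p^j$, and the precise valuations $v_p\binom{tp^l}{k}=l+v_p(t)-v_p(k)$; getting these to conspire correctly so that the $n_l$-coefficient is singled out is where the bulk of the work lies.
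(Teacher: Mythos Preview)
Your difficulty in the final paragraph stems from a typo in the statement: the right-hand side should read $a_1^{p^j n_1}$, not $a_1^{p^j n_l}$. (This is confirmed both by the paper's own proof and by the way the lemma is applied in \cref{lem:pj-pows}, where the $p\mid t$ case yields $a_1^{n_1 p^j}$ regardless of which $l\geq 1$ is used.) You were led to expect that the $l$-th coefficient $n_l$ is singled out, and so anticipated delicate $p$-adic cancellations among the $\binom{tp^l}{k}$ in the first row of $X$; in fact it is always the first coefficient $n_1$ that appears, and the first row is trivial to compute.

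The observation you are missing is that the first row of $B$ is $(1,0,\ldots,0)$. Hence the first row of every power $B^k$ is again $(1,0,\ldots,0)$, and the first row of $X=\sum_{t=1}^{p^{j-l}}B^{tp^l}$ is simply $(p^{j-l},0,\ldots,0)$. After multiplying by $p^l$ the first row becomes $(p^j,0,\ldots,0)$, and since you have already argued that rows $2,\ldots,p^j$ of $p^lX$ vanish (those entries are taken modulo $p$ and $l\geq 1$), the result follows immediately. This is exactly the paper's argument, which occupies only a few lines; the eigenvector analysis of your first paragraph and the binomial-coefficient bookkeeping of your second and third paragraphs, while not wrong, are unnecessary for this lemma.
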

\begin{proof}
  Let $X$ be shorthand for $X_{p,j}(p^l)$.  Since $l>0$ the multiplication by $p^l$ guarantees that all entries of $p^l X$, except possibly those in the first row, are zero.  The (1,1) entry of every matrix in the sum defining $X$, on the other hand, is $1$.  Since there are $p^{j-l}$ terms in the summation, the $(1,1)$ entry of $p^l X$ is therefore $p^j$.  Since the only non-zero entry in the first row of $B$ is the (1,1) entry, every other entry on the first row in every power of $B$ is automatically 0.  This completes the proof.
\end{proof}

%

We can then describe the $p^j$-th powers in $F(p,j)$ by
\begin{align}\label{eq:gen-powers}
  \left( a_1^{n_1}\cdots a_{p^j}^{n_{p^j}} b^{p^l r} \right)^{p^j}  &= p^{l}X_{p,j}(p^l). \left(a_1^{n_1}\cdots a_{p^j}^{n_{p^j}}\right),
\end{align}
where $p\nmid r$.

\begin{lem}\label{lem:pj-pows}
  For $a=a_1^{n_1}\cdots a_{p^j}^{n_{p^j}} b^t\in F(p,j)$  we have
  \begin{align*}
    a^{p^j} &= \begin{cases}
      a_1^{n_1 p^j},& p\mid t\\
      a_1^{n_1 p^j} a_{p^j}^{-n_1},& p\nmid t
    \end{cases}.
  \end{align*}
\end{lem}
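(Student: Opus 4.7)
The plan is to split on the $p$-adic valuation of $t$ and combine equation \eqref{eq:gen-powers} with the explicit formulas for $X_{p,j}(1)$ and $X_{p,j}(p^l)$ supplied by \cref{lem:x-1} and \cref{lem:x-L}. Since $b$ has order $p^j$, I may assume $0 \le t < p^j$.

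First I would dispose of the case $p \nmid t$. Taking $l = 0$ and $r = t$ in \eqref{eq:gen-powers} gives
\[ a^{p^j} = X_{p,j}(1).\prod_i a_i^{n_i}, \]
and \cref{lem:x-1} rewrites the right-hand side as $a_1^{p^j n_1} a_{p^j}^{-n_1}$, which is exactly the $p \nmid t$ branch of the claimed formula.

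Next I would handle the case $p \mid t$ with $t \ne 0$. Writing $t = p^l r$ with $1 \le l < j$ and $p \nmid r$, equation \eqref{eq:gen-powers} delivers
\[ a^{p^j} = p^l X_{p,j}(p^l).\prod_i a_i^{n_i}, \]
and \cref{lem:x-L} identifies this with $a_1^{p^j n_1}$. The residual case is $t = 0$, which falls outside the hypothesis $p \nmid r$ of \eqref{eq:gen-powers}; here $a$ lies in the abelian subgroup $Q_{p,j}$ and a direct computation gives $a^{p^j} = \prod_i a_i^{p^j n_i} = a_1^{p^j n_1}$, since every $a_k$ with $k \ge 2$ has order $p$, which divides $p^j$.

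I do not anticipate any real difficulty: the lemma is essentially immediate from \eqref{eq:gen-powers} once the two preceding structural lemmas are plugged in. The only point requiring a moment's attention is the boundary case $t = 0$, which is not literally covered by \eqref{eq:gen-powers}, but is settled by the one-line abelian computation just described.
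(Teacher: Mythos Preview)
Your proposal is correct and follows essentially the same approach as the paper: the paper's proof is the one-liner ``Follows directly from \cref{lem:x-1,lem:x-L} applied to \cref{eq:gen-powers},'' which is exactly what you unpack. The only minor difference is your separate treatment of $t=0$; since \cref{lem:x-L} is stated for $1\le l\le j$ and $X_{p,j}(p^j)$ is the identity, the paper implicitly absorbs that case into \cref{eq:gen-powers} with $l=j$, but your direct abelian computation is equally valid.
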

\begin{proof}
  Follows directly from \cref{lem:x-1,lem:x-L} applied to \cref{eq:gen-powers}.
\end{proof}
\begin{cor}\label{cor:exp}
  $F(p,j)$ has exponent $p^{j+1}$.
\end{cor}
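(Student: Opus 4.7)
The plan is to read off the corollary directly from the formula in \cref{lem:pj-pows}. For any element $a = a_1^{n_1}\cdots a_{p^j}^{n_{p^j}} b^t \in F(p,j)$, that lemma shows $a^{p^j}$ lies in the subgroup $\cyc{a_1^{p^j}, a_{p^j}}$ of $Q_{p,j}$, regardless of whether $p\mid t$ or $p\nmid t$. Since $a_1$ has order $p^{j+1}$ and $a_{p^j}$ has order $p$ by construction, both $a_1^{p^j}$ and $a_{p^j}$ have order dividing $p$, and they commute because $Q_{p,j}$ is abelian. Hence $\cyc{a_1^{p^j}, a_{p^j}}$ has exponent dividing $p$, which gives $a^{p^{j+1}} = (a^{p^j})^p = 1$ for every $a \in F(p,j)$.

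To see that the exponent is exactly $p^{j+1}$ rather than a smaller power of $p$, note that $a_1 \in Q_{p,j} \subseteq F(p,j)$ already has order $p^{j+1}$. Combining these two observations yields the claim. There is no real obstacle here; the work was already done in establishing \cref{lem:x-1}, \cref{lem:x-L}, and \cref{lem:pj-pows}.
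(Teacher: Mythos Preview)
Your proposal is correct and matches the paper's approach: the corollary is stated immediately after \cref{lem:pj-pows} with no separate proof, so it is meant to be read off exactly as you do. Your added remark that $a_1$ witnesses the lower bound $p^{j+1}$ makes the deduction fully explicit.
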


\begin{thm}\label{thm:fpj-main}
  Let $F(p,j)$ be as above.  Then the following all hold.
  \begin{enumerate}
    \item $F(p,j)$ is $FSZ_{p^j}$.
    \item If $p>3$, then $F(p,j)$ has a central subgroup $A\cong \BZ_p$ such that $F(p,j)/A$ is not $FSZ_{p^j}$.
    \item If $p>3$, then for a cyclic group $C$ of order $p^{j+1}$ there exists a central product $H=F(p,j)\ast C$ such that $H$ is not $FSZ_{p^j}$.
  \end{enumerate}
\end{thm}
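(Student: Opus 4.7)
My plan is to dispose of (2) and (3) first as quick corollaries of earlier results, then tackle (1) by direct computation of $|G_{p^j}(u,g)|$.

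For (2), set $z = a_1^{p^j} a_{p^j}$ and $A = \cyc{z}$; since $a_1^{p^j}$ and $a_{p^j}$ each have order $p$ in the center, $A \cong \BZ_p$, and \cref{prop:has-bad-quot} identifies $F(p,j)/A$ with $S(p,j)$, which is non-$FSZ_{p^j}$ for $p>3$ by \citep[Theorem 1.9]{K16:p-examples}. For (3), apply \cref{thm:quot-to-prod} with the same $z$: $o(z) = p$ divides $m = p^j$ (as $j\geq 1$), so the hypotheses hold, and (2) gives a central product $K = F(p,j)\ast\cyc{x}$, where $x^{p^j} = z$ forces $|\cyc{x}| = p^{j+1}$, that is not $FSZ_{p^j}$.

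The substantive task is (1). The key input is \cref{lem:pj-pows}: every $p^j$-th power in $G := F(p,j)$ lies in $N := \cyc{a_1^{p^j}, a_{p^j}} \cong \BZ_p \times \BZ_p$, and is determined by $n_1 \bmod p$ (the first coordinate of the $Q$-part) together with $t \bmod p$ (the $b$-exponent). Hence $G_{p^j}(u,g) = \emptyset$ whenever $g \notin N$, and since the coprime power map preserves the subgroup $N$, the FSZ condition is vacuous in that case. Otherwise write $g = a_1^{p^j s''} a_{p^j}^r$ with $(s'', r) \in (\BZ/p\BZ)^2$; since $g$ is central, the centralizer condition imposes nothing on $u$. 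Write $a = q_a b^{t_a}$, $u = q_u b^{t_u}$, and let $n_1, u_1$ denote the first coordinates of $q_a, q_u$. Because the first row of $B_{p,j}$ is $(1, 0, \dots, 0)$, the first coordinate of $b^{t_a}.q_u$ equals $u_1$, so the first coordinate of the $Q$-part of $au$ is $n_1 + u_1$ regardless of $t_a$.

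Now \cref{lem:pj-pows} turns the conditions $a^{p^j} = (au)^{p^j} = g$ into a system of congruences mod $p$ in $n_1, u_1, t_a, s'', r$, with $n_2,\dots,n_{p^j}$ and the higher bits of $n_1$ and $t_a$ all free. A four-way case analysis on $(t_a, t_u) \bmod p$ shows that in every non-vacuous case $u_1 \equiv 0 \pmod p$ is forced, and the resulting count of admissible $a$ depends on $(s'', r)$ only through which of the orbits $\{(0,0)\}$, $\{(s,0) : s\neq 0\}$, $\{(0,r) : r\neq 0\}$, $\{(-r,r) : r\neq 0\}$, $\{(s,r) : s,r\neq 0,\ s \neq -r\}$ of the diagonal $(\BZ/p\BZ)^\ast$-action on $(\BZ/p\BZ)^2$ it lies in. Since $g \mapsto g^n$ sends $(s'', r)$ to $(ns'', nr)$ and so preserves these orbits, $|G_{p^j}(u,g)| = |G_{p^j}(u, g^n)|$ for every $n$ coprime to $p$, yielding $FSZ_{p^j}$. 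The main obstacle is the bookkeeping in the case analysis; the cleanest organization is to extract the necessity of $u_1 \equiv 0 \pmod p$ first, then tally each case separately and verify constancy on the orbits listed above.
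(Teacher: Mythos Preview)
Your argument is correct and follows essentially the same path as the paper's. Parts (2) and (3) are disposed of identically via \cref{prop:has-bad-quot} and \cref{thm:quot-to-prod}. For part (1), both you and the paper reduce via \cref{lem:pj-pows} to the observation that $a^{p^j}$ and $(au)^{p^j}$ depend only on the first $Q$-coordinate and the $b$-exponent modulo $p$; the paper then notes that for $g\neq 1$ the mixed cases (exactly one of $t_a$, $t_a+t_u$ divisible by $p$) are vacuous and declares the rest ``easy to see,'' whereas you carry out the residue-class bookkeeping explicitly. One small terminological point: your fifth set $\{(s,r):s,r\neq 0,\ s\neq -r\}$ is a union of $p-2$ diagonal orbits rather than a single orbit, but since the count there is zero (no $p^j$-th power has that shape), and since all you actually use is that $g\mapsto g^n$ preserves each of your five sets, this does not affect the argument.
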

\begin{proof}
  Set $G=F(p,j)$. The last two parts follow immediately from \citep[Theorem 1.9]{K16:p-examples}, \cref{prop:has-bad-quot,thm:quot-to-prod}.

  For the first part, let $1\neq g=a_1^{k_1}\cdots a_{p^j}^{k_{p^j}} b^r$, $u = a_1^{j_1}\cdots a_{p^j}^{j_{p^j}}b^s$, and $n\in\BN$ with $p\nmid n$.  From \cref{lem:pj-pows} we see that
  \[ G_{p^j}(u,g) = \emptyset\]
  if $g\not\in \cyc{a_1^{p^j},a_{p^j}}\subseteq Z(G)$.

  Let $V$ be the subgroup of $Q$ generated by $a_2,...,a_{p^j}$.  By \cref{lem:pj-pows} the $p^j$-th powers do not depend on elements in $V$.  As such we are free to suppress all elements of $V$ when taking $p^j$-th powers.

  Letting $a= a_1^{n_1}v b^t$ for some $v\in V$, we then have
  \[ a^{p^j} = (a_1^{n_1} b^t)^{p^j},\]
  and
  \[ (au)^{p^j} = (a_1^{n_1+j_1} b^{t+s})^{p^j}.\]
  By \cref{lem:pj-pows} we see that for equality to hold with $1\neq g$ we must have that either $t$ and $t+s$ are both invertible modulo $p$, or both are divisible by $p$. From this it is then easy to see that $|G_{p^j}(u,g)| = |G_{p^j}(u,g^n)|$ for any $p\nmid n$.

  This shows that $F(p,j)$ is $FSZ_{p^j}$ as desired, and so completes the proof.
\end{proof}

\begin{example}
  The group $F(5,1)$ is exactly \verb"SmallGroup"($5^7$,654) in GAP.  This group was the basis for the above construction, in much the same fashion that $S(5,1)=$ \verb"SmallGroup"($5^6$,632) was the motivating example in \citep{K16:p-examples}.
\end{example}
\begin{example}
  Since $p^{p+1}$ is the smallest possible order for any non-$FSZ$ $p$-group, for $p>3$ we see that $F(p,1)$ has the smallest possible order of any $FSZ$ $p$-group with a quotient that is not $FSZ$.  We do not know if $S(p,j)$ has minimal order amongst the non-$FSZ_{p^j}$ $p$-groups. We therefore do not know if $F(p,j)$ has the smallest possible order for an $FSZ_{p^j}$ $p$-group with a (central) quotient that is not $FSZ_{p^j}$.  The minimal orders for, and even the existence of, non-$FSZ$ 2-groups and 3-groups remains an open question.
\end{example}
\begin{rem}
  The groups $F(p,j)$, much like the groups $S(p,j)$, were designed so that there was a simple formula for $p^j$-th powers.  However, lower powers are in general more complex. In particular, these will depend on the $t$ in $b^t$, rather than just the order of $b^t$.  As our ultimate goal of finding an $FSZ$ group which is not $FSZ^+$ will only need to consider $F(p,1)$, we will make no attempt here to understand the $FSZ_{p^l}$ properties of $F(p,j)$ for $l<j$.
\end{rem}

\section{Regular wreath products with \texorpdfstring{$\BZ_p$}{Z/pZ}}\label{sec:main}
Our goal now is to use the results of the preceding section to help us construct, for every prime $p>3$, a group of order $p^{(p+1)^2}$ which is $FSZ$ but not $FSZ_p^+$.  These will yield the first known examples that demonstrate that the $FSZ^+$ properties are strictly stronger then the $FSZ$ properties in general.

The idea is to look for a group $G$ constructed from $F(p,1)$.  We desire that the centralizers in $G$ are constructed from those in $F(p,1)$ in a nice fashion, and in particular that for some $g\in G$ we have $C_G(g)\cong F(p,1)\ast \BZ_{p^2}$ is not $FSZ_p$, but $G_p(u,g^n)=\emptyset$ for all $u\in G$ and $(n,|G|)=1$.  We will see that the regular wreath product $F(p,1)\wr_r \BZ_p$ gives exactly what we want.  First, we will need to review such regular wreath products, and investigate how they behave with respect to the $FSZ$ properties.

For wreath products of the form $D\wr_r \BZ_p$ we write the typical element as \[(d_0,...,d_{p-1},i),\] where $d_0,...,d_{p-1}\in D$ and $i$ is an integer.  The last coordinate and the indices for the first $p$ coordinates are all taken modulo $p$.

\begin{lem}\label{lem:cents}
  Let $D$ be a group, $p$ a prime, and set $G=D\wr_r \BZ_p$.  Then the following hold.
  \begin{lemenum}
    \item \label{lem-part:cents-1}If $i\not\equiv 0\bmod p$, then any element of the form $(d_0,...,d_{p-1},i)$ is conjugate to $(x,1,...,1,i)$ where $x$ is any element of the conjugacy class of $d_0\cdots d_{p-1}$ in $D$.  Letting $\Delta C_D(x)$ be the image of the diagonal embedding of $C_D(x)$ into $G$, we have
        \begin{align*}
            C_G((x,1,...,1,i)) &= \cyc{\Delta C_D(x),(x,1,...,1,i)}\\
            &= \Delta C_D(x)\ast\cyc{(x,1,...,1,i)}
        \end{align*}
        is a central product with respect to the central subgroup $\cyc{x}$.  In particular, $\Delta C_D(x)$ is a normal subgroup of index $p$, whose coset representatives can be taken as the first $p$ powers of $(x,1,...,1,i)$. Indeed, $\Delta C_D(x)$ commutes elementwise with these representatives.
    \item \label{lem-part:cents-2}Any element of the form $(d_0,...,d_{p-1},0)$ such that the $d_i$ are all conjugate to each other is conjugate to $(d,...,d,0)$ where $d$ is any element conjugate to $d_0$ in $D$.  In this case, we have
        \[ C_G(d,...,d,0) = C_D(d)\wr_r \BZ_p.\]
    \item \label{lem-part:cents-3}Any element of the form $(d_0,...,d_{p-1},0)$ such that $d_i$ and $d_j$ are not conjugate for some $i,j$ is conjugate to $(x_0,...,x_{p-1},0)$, where $x_i$ is any element conjugate to $d_i$ in $D$.  In this case, we have
        \[ C_G(d_1,..,d_p,0) = C_D(d_1)\times\cdots\times C_D(d_p).\]
  \end{lemenum}
  Moreover, these enumerate all of the conjugacy classes of $G$.  In particular, no element satisfying one of the cases is conjugate to any element from one of the other cases, or to an element with a different last coordinate.
\end{lem}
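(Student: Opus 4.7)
The key observation is that the last coordinate of an element of $G = D\wr_r\BZ_p$ is a conjugation invariant, since $\BZ_p = G/D^p$ is abelian; this already separates \cref{lem-part:cents-1} from \cref{lem-part:cents-2,lem-part:cents-3}. The computational engine in all three cases is the direct commutation equation: $(e_0,\ldots,e_{p-1},j)$ commutes with $(d_0,\ldots,d_{p-1},i)$ precisely when $e_k d_{k-j} = d_k e_{k-i}$ for every $k\in\BZ_p$.

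For \cref{lem-part:cents-1}, since $p$ is prime and $i\neq 0$, the shift $\sigma^i$ is a single $p$-cycle. I would invoke the standard ``cycle product'' normal form for wreath products: there exists a base conjugation $(g_0,\ldots,g_{p-1},0)$ carrying $(d_0,\ldots,d_{p-1},i)$ to $(y,1,\ldots,1,i)$, where $y$ is a cycle product of the $d_k$'s lying in the $D$-conjugacy class of $d_0 d_1\cdots d_{p-1}$; any representative $x$ of that class can then be attained by further conjugating the first slot. With $w = (x,1,\ldots,1,i)$, the commutation equations for $j=0$ collapse to $e_k = e_{k-i}$ for all $k\neq 0$ together with $e_0 x = x e_{-i}$, and since $i$ generates $\BZ_p$, iteration of the first family forces all $e_k$ to equal a common element $e\in C_D(x)$, giving base centralizer $\Delta C_D(x)$. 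The powers $w^0,w^1,\ldots,w^{p-1}$ lie in $C_G(w)$ with distinct last coordinates, so they realize all cosets of $\Delta C_D(x)$ in $C_G(w)$; a direct coordinatewise multiplication gives $w^p = (x,x,\ldots,x,0) = \Delta x$, and verifying that $\Delta C_D(x)$ commutes elementwise with $w$ then identifies $C_G(w)$ as the claimed central product amalgamating $\cyc{x}$.

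For \cref{lem-part:cents-2,lem-part:cents-3}, the permutation part is trivial, so base conjugations act as independent $D$-conjugations in each slot while conjugation by $(1,\ldots,1,1)$ cyclically shifts the slots; consequently the cyclic sequence of $D$-conjugacy classes of the entries is a conjugation invariant, separating the two cases. In \cref{lem-part:cents-3}, any $j\neq 0$ in the centralizer equation $e_k x_{k-j} = x_k e_k$ would force $x_{k-j}$ to be $D$-conjugate to $x_k$ for every $k$, hence all the $x_k$ mutually conjugate --- contrary to hypothesis --- so $j$ must vanish and the equations reduce to $e_k\in C_D(x_k)$, giving the asserted direct product. In \cref{lem-part:cents-2}, after normalizing to $(d,\ldots,d,0)$ by a mixture of shifts and per-slot conjugations, the commutation equation imposes $e_k\in C_D(d)$ with no restriction on $j$, producing $C_D(d)^p\rtimes\BZ_p = C_D(d)\wr_r\BZ_p$. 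Completeness of the case analysis is clear from the trichotomy on last coordinate and on the conjugacy pattern of the entries, and the disjointness statement follows from the invariants identified above.

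The step I expect to require the most care is the cycle-product identification in \cref{lem-part:cents-1}: one must choose the $g_k$'s consistently along the single orbit of $\sigma^i$ and confirm that the resulting product genuinely represents the claimed $D$-conjugacy class of $d_0\cdots d_{p-1}$. This is however standard wreath-product bookkeeping and reduces to a solvable system of equations in the $g_k$'s.
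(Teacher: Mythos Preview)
Your proposal is correct and follows essentially the same route as the paper, which likewise appeals to standard wreath-product manipulations and only sketches part (i); the sole presentational difference is that the paper writes down an explicit base conjugator $h=(h_0,\ldots,h_{p-1},0)$ with $h_0=1$ and $h_{ki}=\big(\prod_{l=k}^{p-1} d_{li}\big)^{-1}$ realizing the cycle-product normal form, whereas you defer this to ``standard bookkeeping.'' Your treatment of parts (ii) and (iii) via the commutation equation $e_k d_{k-j}=d_k e_{k-i}$ is more detailed than the paper's (which omits these cases entirely), but the underlying argument is the same.
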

\begin{proof}
  All claims are proven by standard manipulations in wreath products, so we sketch only the proof of the first item.  In all cases the result mirrors the determination of the conjugacy classes for iterated wreath products of cyclic groups given by \citet{Orellana2004}.

  Let $i\not\equiv 0\bmod p$, and let $g=(d_0,...,d_{p-1},i)\in G$.  Set $x=d_0\cdots d_{p-1}$ and define $h_0,...,h_{p-1}$ by
  \[ h_{ki} = \Big(\prod_{l=k}^{p-1} d_{li}\Big)\inv, \ 0 < k < p\]
  and $h_0=1$.  Then for $h=(h_0,...,h_{p-1},0)$ we have $g^h = (x,1,...,1,i)$, as desired.  It is clear that $\cyc{\Delta C_D(x),(x,1...,1,i)}\subseteq C_G((x,1,...,1,i))$.  A straightforward check of the commutation relation $g^y=g$ for $y=(y_0,...,y_{p-1},j)$ then gives the reverse equality.
\end{proof}

We will also use the following standard lemma, the proof of which is elementary.
\begin{lem}\label{lem:order-irrev}
  Let $G$ be any group and suppose $g_0,...,g_n\in G$ are such that $g_0,...,g_n\in C_G(g_0\cdots g_n)$.  Then $g_0\cdots g_n = g_{i}\cdots g_{n+i}$ for all $i\in\BZ$, where indices are taken modulo $n+1$.  Equivalently, the product is invariant under a cyclic permutation of the terms.
\end{lem}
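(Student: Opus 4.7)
The plan is to reduce the full cyclic invariance to the case of a single cyclic shift, and then iterate. Let me write $P = g_0 g_1 \cdots g_n$ for brevity.

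First I would handle the single shift. The hypothesis gives $g_0 P = P g_0$. Multiplying this identity on the left by $g_0^{-1}$ yields
\[
g_1 g_2 \cdots g_n = g_0^{-1} P g_0,
\]
and then multiplying on the right by $g_0$ gives
\[
g_1 g_2 \cdots g_n g_0 = g_0^{-1} P g_0 \cdot 1 \cdot g_0 \cdot g_0^{-1} \cdot g_0 = P,
\]
or more directly, $g_1 \cdots g_n g_0 = g_0^{-1}(g_0 g_1 \cdots g_n) g_0 = g_0^{-1} P g_0 = P$ since $g_0$ commutes with $P$. So a single left cyclic shift preserves the value of the product.

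Next I would iterate. After one shift the product $g_1 g_2 \cdots g_n g_0$ still equals $P$, so each of the original factors $g_k$ continues to commute with it. Applying exactly the same argument to the tuple $(g_1, g_2, \ldots, g_n, g_0)$ yields the two-shift identity $g_2 \cdots g_n g_0 g_1 = P$, and inductively we obtain $g_i g_{i+1} \cdots g_{n+i} = P$ for every $i \geq 0$, with indices modulo $n+1$. Since cyclic shifts generate a finite cyclic group of order $n+1$, this also covers negative $i$.

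The only thing worth checking carefully is that the commutativity hypothesis propagates through the iteration, but this is automatic: after each shift the product is still the element $P$, and each $g_k$ commutes with $P$ by hypothesis. So there is no real obstacle — the lemma is essentially a one-line consequence of the identity $g_0^{-1} P g_0 = P$ combined with an induction on the number of shifts.
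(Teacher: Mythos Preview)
Your argument is correct and is exactly the elementary conjugation-plus-induction proof one expects; the paper itself omits the proof entirely, calling it elementary, so there is nothing further to compare. The one cosmetic blemish is the first displayed equation after ``multiplying on the right by $g_0$,'' which is garbled (the string $g_0^{-1} P g_0 \cdot 1 \cdot g_0 \cdot g_0^{-1} \cdot g_0$ does not simplify to $P$), but you immediately supersede it with the clean line $g_1\cdots g_n g_0 = g_0^{-1} P g_0 = P$, which is the correct computation.
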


For notational convenience, when investigating the $FSZ$ properties for regular wreath products $D\wr_r \BZ_p$ we will sometimes write a typical element $(x_0,...,x_{p-1},j)$ in the short-hand form $(x_l,j)$.  As the main example, if $x_l = \prod_{s=0}^{p-1} y_{l+sj}$ for some $j$ and all $l$, instead of writing $(\prod_{s=0}^{p-1} y_{sj},\prod_{s=0}^{p-1} y_{1+sj},...,k)$, we write simply $(\prod_{s=0}^{p-1} y_{l+sj},k)$.

\begin{thm}\label{prop:wreath-condition}
  Let $D$ be an $FSZ_{p^t}$ and $FSZ_{p^{t-1}}$ group for some $t\in\BN$.  If for any $d,u_0\in D$ the number of elements $(x_0,...,x_{p-1})\in D^p$ satisfying
  \begin{align}\label{eq:wreath}
    x_l^{p^t} = \Big(u_{0} x_1 x_2\cdots x_{p-1} x_0\Big)^{p^{t-1}} = d^n, \ \text{for all } l,
  \end{align}
  does not depend on $n$ when $p\nmid n$, then $D\wr_r\BZ_p$ is $FSZ_{p^t}$.
\end{thm}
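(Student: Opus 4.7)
I would proceed by case analysis on the conjugacy class of $g$ in $G = D \wr_r \BZ_p$ using \cref{lem:cents}. Write $a = (y_0,\ldots,y_{p-1},k)$ and $u = (u_0,\ldots,u_{p-1},j)$; note that $(n,|G|)=1$ implies $C_D(d^n) = C_D(d)$ throughout. If $g$ has nonzero last coordinate then $a^{p^t}$ always has last coordinate $0$, so $G_{p^t}(u,g) = G_{p^t}(u,g^n) = \emptyset$. If $g=(x_0,\ldots,x_{p-1},0)$ has entries not all $D$-conjugate, then both $a$ and $u$ must have last coordinate $0$ (any nonzero last coordinate in $C_G(g)$ would cyclically permute the inequivalent $x_l$), the conditions decouple as $y_l^{p^t} = (y_l u_l)^{p^t} = x_l$ for each $l$, and the $FSZ_{p^t}$ hypothesis on $D$ gives $\prod_l |D_{p^t}(u_l,x_l)| = \prod_l |D_{p^t}(u_l,x_l^n)|$.

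The main case is $g = \Delta(d) = (d,\ldots,d,0)$, whose centralizer $C_D(d) \wr_r \BZ_p$ has $d$ central in $C_D(d)$. A direct computation shows that for $k \neq 0$ one has $a^p = (\alpha_l, 0)$ with $\alpha_l = y_l y_{l+k} \cdots y_{l+(p-1)k}$ and $\alpha_{l+k} = y_l^{-1}\alpha_l y_l$; since $y_l \in C_D(d)$, the equation $a^{p^t} = \Delta(d)$ collapses to the single condition $\alpha_0^{p^{t-1}} = d$, and analogously for $(au)^{p^t} = g$ via the corresponding $\beta_l$. I would then partition on $(k,j) \bmod p$ into four subcases.

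The subcase $k = j = 0$ reduces to a product of $D_{p^t}$-counts and is handled by $FSZ_{p^t}$. In the subcases $(k=0,\,j \neq 0)$ and $(k \neq 0,\,k+j=0)$, since $j \neq 0$ I can conjugate $u$ within $C_G(g)$ to the standard form $(u_0',1,\ldots,1,j)$ by \cref{lem:cents}. After reparametrizing the $y_l$ (setting $x_s = y_{sj}$ in the first subcase, and $x_s = y_{sk}$ followed by the substitution $\tilde x_{p-1} = x_{p-1} u_0'$ in the second) and performing cyclic rotations legitimized by $d \in Z(C_D(d))$ together with $x_l, u_0' \in C_D(d)$, the two conditions become exactly the hypothesis form $x_l^{p^t} = (u_0 x_1 x_2 \cdots x_{p-1} x_0)^{p^{t-1}} = d$, with $u_0 = u_0'$ in the first subcase and $u_0 = (u_0')^{-1}$ (after a further cyclic relabeling of the $x$-indices) in the second. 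Replacing $d$ by $d^n$ gives the $g^n$ count, so the hypothesis yields equality.

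The remaining subcase $k \neq 0$, $k+j \neq 0$ (which includes $j = 0$) is where I expect the main technical work, and is the only place where $FSZ_{p^{t-1}}$ of $D$ enters the argument: here both conditions are already $p^{t-1}$-th power equations, and $\beta_0$ may involve multiple nontrivial $u$-coordinates, so the hypothesis does not match directly. A suitable reparametrization of the $y_l$'s as $z_s$ writes $\alpha_0 = z_0 P$ and $\beta_0 = z_0 Q'$ with $P,Q' \in C_D(d)$ depending only on $z_1,\ldots,z_{p-1}$ and $u$. Setting $b = z_0 P$ and $v = P^{-1} Q'$ converts the conditions into $b^{p^{t-1}} = (bv)^{p^{t-1}} = d$, i.e., $b \in D_{p^{t-1}}(v,d)$. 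For each fixed tuple $(z_1,\ldots,z_{p-1})$, $FSZ_{p^{t-1}}$ of $D$ gives $|D_{p^{t-1}}(v,d)| = |D_{p^{t-1}}(v,d^n)|$, and summing over $(z_1,\ldots,z_{p-1})$ finishes the subcase. The main obstacle is the bookkeeping: tracking the reparametrizations, cyclic shifts, and the placement of $u_0'$ so that the hypothesis form emerges with the correct $u_0$ in the two matching subcases, and confirming that the $(k,j)$-partition accounts for every element of $G_{p^t}(u,g)$ exactly once.
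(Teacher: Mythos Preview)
Your plan is correct and follows essentially the same route as the paper's proof: the same four-way split on $(k,k+j)\bmod p$, the same normalization of $u$ via \cref{lem-part:cents-1} when $j\neq 0$, the same use of $FSZ_{p^{t-1}}$ in the $(k\neq 0,\,k+j\neq 0)$ subcase by freezing $p-1$ coordinates and reducing to a $D_{p^{t-1}}(v,d^n)$ count, and the same reduction of the two mixed subcases to the hypothesis form via \cref{lem:order-irrev}. The one slip is in the $(k\neq 0,\,k+j=0)$ subcase: after normalizing $u$ the only nontrivial $u$-entry sits at index $0$, so the correct absorption is $\tilde x_0 = x_0 u_0'$ (giving $u_0=(u_0')^{-1}$ after a cyclic rotation), not $\tilde x_{p-1}=x_{p-1}u_0'$; the paper handles this by substituting on all coordinates simultaneously, $y_l=x_l u_{l+i}$, which reduces that subcase verbatim to the $(k=0,\,j\neq 0)$ subcase with $u$ replaced by $u^{-1}$.
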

\begin{proof}
  We have that $\exp(G)=p\cdot \exp(D)$, so it suffices to consider the $FSZ_{p^j}$ properties for $p^j\mid \exp(D)$.  It is clear that all $p$-th powers in $G$ are elements of $D^p$, whence $G_{p^t}(u,g)=\emptyset$ whenever $g\not\in D^p$.  By assumptions on $D$ it then follows from \cref{lem:cents} that we need only consider the case where $g=(d,...,d,0)$ for some $d\in D$.

  Fix $n\in\BN$ with $p\nmid n$.

  We now proceed to investigate the identities $a^{p^t} = (au)^{p^t} = g^n = (d^n,0)$.  Let $a=(x_l,i)$, $u = (u_l,j)$.  Then
  \begin{align}\label{eq:ab-pt-a-pow}
      a^{p^t} &= \begin{cases}
        ( \Big(\prod_s x_{l+si}\Big)^{p^{t-1}},0),& p\nmid i\\
        (x_l^{p^t},0),& p\mid i
      \end{cases},\\
      (au\inv)^{p^t} &= \begin{cases}
        ( \left(\prod_s x_{l+s(i+j)} u_{l+i+(i+j)s}\right)^{p^{t-1}},0),& p\nmid i+j\\
        ( (x_l u_{l+i})^{p^t},0),& p\mid i+j
      \end{cases}.\label{eq:ab-pt-au-pow}
  \end{align}

  So we have a number of natural cases to consider.

  Suppose first that $p\mid i$ and $p\mid i+j$ (so that $p\mid j$).  Then the equations take the form
  \[ x_l^{p^t} = (x_l u_l)^{p^t} = d^n\]
  for all $l$. The number of solutions to this is independent of $n$ for all $l$ since $D$ is $FSZ_{p^t}$ by assumption.

  Next suppose that $p\nmid i$ and $p\nmid i+j$.  Then the equations take the form
  \[ \left(\prod_s x_{l+si}\right)^{p^{t-1}} = \left(\prod_s x_{l+s(i+j)} u_{l+i+s(i+j)}\right)^{p^{t-1}} = d^n.\]
  So suppose we have any arbitrary elements $x_1,...,x_{p-1}$, and then rewrite the above for $l=0$ as
  \[ (x_0 b)^{p^{t-1}} = (x_0 b v)^{p^{t-1}} = d^n,\]
  with
    \begin{align*}
        b = & \prod_{k=1}^{p-1} x_{ki}\\
        v = & b\inv u_{i}\prod_{k=1}^{p-1} x_{(i+j)k} u_{i+(i+j)k}.
    \end{align*}
  It follows from \cref{lem:order-irrev} that we have a bijection from $D_{p^{t-1}}(v,d^n)$ to the $x_0$ value of those $(x_0,...,x_{p-1},i)\in G_{p^t}(u,g^n)$ with the given $x_1,...,x_{p-1}$ and $i$ via $a\mapsto ab\inv$.  Since $x_1,...,x_{p-1}$ were arbitrary and $u$ is fixed, $v$ does not depend on $n$ and $|D_{p^{t-1}}(v,d^n)|$ does not depend on $n$ by assumptions.  Therefore the number of solutions in this case is also independent of $n$.

  Note that the cases so far combine to completely cover the case where $p\mid j$.  So in the remainder of the cases we have $p\nmid j$.  Now $u\in C_G(g) = C_D(d)\wr_r\BZ_p$ and by \citep[Proposition 7.2]{KSZ2} we always have a bijection $G_m(x,y)\to G_m(x^t,y^t)$ given by $a\mapsto a^t$ for any group $G$, $m\in\BN$ and $x,y,t\in G$.  Therefore by \cref{lem-part:cents-1} we may suppose without loss of generality that $u=(u_0,1,...,1,j)$.

  For the third case, we suppose that $p\mid i$ and $p\nmid i+j$ (and so $p\nmid j$).  Then the equations take the form
  \[ x_l^{p^t} = \left(\prod_s x_{l+sj} u_{l+sj}\right)^{p^{t-1}} = d^n\]
  for all $l$.  By \cref{lem:order-irrev} and assumptions on $u$ the middle term is necessarily always equal to
  \[\Big(u_0\prod_{s=1}^p x_s \Big)^{p^{t-1}},\]
  or equivalently it is equal to
  \[\left(\Big(\prod_{s=1}^{p} x_s\Big)u_0\right)^{p^{t-1}}.\]
  Therefore these equations are of the form given in the statement of the result.  So we continue on to the next case.

  So consider the last case: $p\nmid i$ and $p\mid i+j$.  Then the equations take the form
  \[ \left(\prod_s x_{l+si}\right)^{p^{t-1}} = (x_l u_{l+i})^{p^t} = d^n.\]
  Making the substitution $y_l = x_l u_{l+i}$ and $v_l = u_{l+i}\inv$, the equations become
  \[ y_l^{p^t} = \left(\prod_s y_{l+si} v_{l+si}\right)^{p^{t-1}} = d^n,\]
  which is same form as the equations from the previous case.  Defining $v = (v_l,-j)$, these equations in fact come from exactly the previous case when replacing $u$ by $v$ and the $x_l$ by the $y_l$, so the same manipulations and simplifications hold to get them into exactly the form from the statement.

  This completes the proof.
\end{proof}
\begin{rem}
  If, for fixed $d^n$, the number of solutions to the third and fourth cases in the preceding proof are equal, then we have the converse statement: $D\wr_r\BZ_p$ is $FSZ_{p^t}$ implies that the number of solutions to \cref{eq:wreath} does not depend on $n$ when $p\nmid n$.  Moreover, we can easily see that in the final case of the proof we have $v\inv=u$, and the bijection $G_m(u,g^n)\to G_m(u\inv,g^n)$ given by $a\mapsto au$ preserves the first two cases but swaps the last two cases.  Nevertheless, there seems to be no clear reason for the third and fourth cases to always have the same number of solutions.  In particular, it seems possible that $D$ and $D\wr_r\BZ_p$ are both $FSZ$ but nevertheless fail to satisfy the theorem.  Similarly, it seems possible for $D$ to be $FSZ$ but yet for $D\wr_r\BZ_p$ to be non-$FSZ$.  The author was unable to find any examples demonstrating either behavior, however.  This is in large part due to the prohibitively large number of elements to check even when $D$ has relatively small order.  All such $D$ and $p$ with $|D\wr_r\BZ_p|\leq 50,000$ were verified to be $FSZ$ with \citet{GAP4.8.4}---most of which are 2-groups with $|D|=128$, or trivially $FSZ^+$ by the exponent criterion of \citep[Corollary 5.3]{IMM}.
\end{rem}
\begin{cor}\label{cor:wreath}
  Let $D$ be a $p$-group.  If $D\wr_r\BZ_p$ is $FSZ_{p^j}$ then $D$ is $FSZ_{p^j}$.

  If we suppose $\exp(D)=p^j$ then $D\wr_r\BZ_p$ is $FSZ_{p^j}$ if and only if $D$ is $FSZ_{p^{j-1}}$.
\end{cor}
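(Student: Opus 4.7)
The plan is to prove the two assertions separately: for the forward implications (all of Statement~1 and the forward direction of the iff in Statement~2) we work directly in $G = D \rwr \BZ_p$ by computing $|G_{p^j}(\tilde u, \tilde g)|$ for carefully chosen lifts $\tilde u, \tilde g$ of elements $u, g \in D$ with $[u,g] = 1$; the reverse direction of the iff is an immediate appeal to \cref{prop:wreath-condition}. The case analysis in $G$ uses the power formulas from the proof of \cref{prop:wreath-condition}: for $a = (y_0,\ldots,y_{p-1}, i)$, the power $a^{p^j}$ is coordinate-wise when $p \mid i$, and equal to $(Y_l^{p^{j-1}}, 0)$ otherwise with $Y_l = \prod_{s=0}^{p-1} y_{l+si}$. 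The key structural relation $Y_{l+i} = y_l^{-1} Y_l y_l$ shows the values $Y_l^{p^{j-1}}$ form a single conjugation orbit under the $y_l$, and so the coordinate pattern prescribed by $\tilde g$ governs which configurations survive.

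For Statement~1, take $g \neq 1$ (the case $g = 1$ being trivial) and lift to $\tilde u = (u,1,\ldots,1,0)$ and $\tilde g = (g,1,\ldots,1,0)$. The $i = 0$ subcase requires $y_0 \in D_{p^j}(u,g)$ and $y_l^{p^j} = 1$ for $l \geq 1$, contributing $|D_{p^j}(u,g)| \cdot N$ with $N := |\{a \in D : a^{p^j} = 1\}|^{p-1}$. For $i \neq 0$, the pattern of $\tilde g$ forces simultaneously $Y_0^{p^{j-1}} = g$ and $Y_i^{p^{j-1}} = 1$; but $Y_i^{p^{j-1}} = y_0^{-1} g y_0$, so this forces $g = 1$, contradicting the hypothesis. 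Hence $|G_{p^j}(\tilde u, \tilde g)| = |D_{p^j}(u, g)| \cdot N$, and equating with the $g \to g^n$ version and cancelling $N$ yields $|D_{p^j}(u, g)| = |D_{p^j}(u, g^n)|$.

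For the forward direction of Statement~2, take $d \neq 1$ and lift to $\tilde u = (u,1,\ldots,1,0)$ and the diagonal $\tilde g = (d, d, \ldots, d, 0)$. The $i = 0$ subcase vanishes since $\exp(D) = p^j$ forces $y_l^{p^j} = 1 \neq d$. For $i \neq 0$, the condition $Y_l^{p^{j-1}} = d$ for all $l$ combined with the conjugation relation forces the partial products $y_0, y_0 y_i, \ldots, y_0 y_i \cdots y_{(p-2)i}$ to lie in $C_D(d)$, equivalent to $p - 1$ of the $y_l$'s (namely $y_0, y_i, \ldots, y_{(p-2)i}$) lying in $C_D(d)$; the remaining $y_l$ is then pinned down by requiring $Y_0 \in D_{p^{j-1}}(u, d)$, with the $u$-condition $(Y_0 u)^{p^{j-1}} = d$ coming from the parallel analysis of $(a \tilde u)^{p^j} = \tilde g$. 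Summing over $i \in \{1, \ldots, p-1\}$ gives $(p-1) |C_D(d)|^{p-1} |D_{p^{j-1}}(u, d)|$, and equating with the $d^n$ version (using $C_D(d) = C_D(d^n)$, which holds because $y \mapsto y^n$ is injective on the $p$-group $D$ when $p \nmid n$) yields the required $|D_{p^{j-1}}(u, d)| = |D_{p^{j-1}}(u, d^n)|$.

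The reverse direction of Statement~2 is an immediate application of \cref{prop:wreath-condition}: the assumption $\exp(D) = p^j$ makes $D$ trivially $FSZ_{p^j}$ (all $p^j$-th powers being $1$), and in \eqref{eq:wreath} with $t = j$ the constraint $x_l^{p^j} = 1$ forces $d = 1$ when $p \nmid n$, after which the substitution $z = u_0 x_1 \cdots x_{p-1} x_0$ shows the count equals $|D|^{p-1} \cdot |\{z \in D : z^{p^{j-1}} = 1\}|$, obviously independent of $n$. The main obstacle throughout is identifying the right lifts $\tilde u, \tilde g$ and tracking the cyclic conjugation of the $Y_l$'s cleanly enough that the $i \neq 0$ subcases either vanish or reduce to a tractable count; once the relation $Y_{l+i} = y_l^{-1} Y_l y_l$ is properly in hand, the rest is essentially algebraic bookkeeping.
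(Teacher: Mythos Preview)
Your argument is correct. The paper's own proof is a one-liner: Statement~1 follows from \cref{lem:cents} and Statement~2 from \cref{prop:wreath-condition}. For Statement~1, the paper's route is shorter than yours: by \cref{lem-part:cents-3}, when $g\neq 1$ the centralizer $C_G((g,1,\ldots,1,0))$ already lies in $D^p$, so every $a\in G_{p^j}(\tilde u,\tilde g)$ has last coordinate $0$ automatically and your $i\neq 0$ case is vacuous from the outset---no conjugation-orbit argument is needed. For the forward direction of Statement~2, your direct computation is exactly the Case~2 analysis from the proof of \cref{prop:wreath-condition}, specialized to $\tilde u=(u,1,\ldots,1,0)$ (for which the auxiliary element $v$ appearing there collapses to $u$), so the two approaches coincide once unpacked. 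One minor slip: the condition coming from $(a\tilde u)^{p^j}=\tilde g$ is $(Y_i u)^{p^{j-1}}=d$, not $(Y_0 u)^{p^{j-1}}=d$, since writing $Z_l=\prod_s z_{l+si}$ with $z_0=y_0u$ and $z_l=y_l$ otherwise, the identity $Z_l=Y_l u$ holds only for $l=i$ (where the modified coordinate sits at the end of the product); for $l=0$ one has $Z_0=y_0 u\, y_i\cdots y_{(p-1)i}\neq Y_0 u$. Your final count $(p-1)\,|C_D(d)|^{p-1}\,|D_{p^{j-1}}(u,d)|$ is nonetheless correct, so this is cosmetic rather than a gap.
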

\begin{proof}
  The first statement follows from \cref{lem:cents}, while the second follows from \cref{prop:wreath-condition}.
\end{proof}
This suffices to obtain a partial result on the $FSZ$ properties of the Sylow subgroups of symmetric groups.
\begin{thm}\label{cor:symm-syl-high}
  Let $j\in\BN$, and let $P$ be a Sylow $p$-subgroup of $S_{p^j}$.  Then $P$ is $FSZ_{p^{j-1}}$.
\end{thm}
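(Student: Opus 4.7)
The plan is to exploit the well-known description of $P$ as an iterated regular wreath product and then apply \cref{cor:wreath} inductively. Write $W_j$ for the Sylow $p$-subgroup of $S_{p^j}$; the standard result (see \citep{Rot99}) is that
\[ W_j \cong \underbrace{\BZ_p\wr_r\BZ_p\wr_r\cdots\wr_r\BZ_p}_{j\text{ copies}} = W_{j-1}\wr_r\BZ_p,\]
with the convention $W_0$ trivial and $W_1=\BZ_p$. A second standard fact we will need is that $\exp(W_j)=p^j$; this is immediate from the wreath product structure, since passing from $W_{j-1}$ to $W_{j-1}\wr_r\BZ_p$ multiplies the exponent by $p$.

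Given these two facts, the argument is a straightforward induction on $j$. The base case $j=1$ is immediate: $W_1=\BZ_p$ is abelian and we need $FSZ_{p^0}=FSZ_1$, which holds for every group. For the inductive step, suppose $j\geq 2$ and assume $W_{j-1}$ is $FSZ_{p^{j-2}}$. Since $\exp(W_{j-1})=p^{j-1}$, the second part of \cref{cor:wreath} applies directly with $D=W_{j-1}$ and gives the equivalence
\[ W_j = W_{j-1}\wr_r\BZ_p \text{ is } FSZ_{p^{j-1}} \iff W_{j-1} \text{ is } FSZ_{p^{j-2}}.\]
The right-hand side holds by the inductive hypothesis, so $W_j$ is $FSZ_{p^{j-1}}$.

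There is essentially no main obstacle here, since \cref{cor:wreath} was custom-built to supply exactly this step. The only non-trivial ingredient external to the paper is the identification of $P$ with the iterated wreath product together with the exponent computation, both of which are entirely standard. It is worth remarking that this argument loses one level of $FSZ$ with each iteration of the wreath product, which is precisely why the result is only $FSZ_{p^{j-1}}$ rather than $FSZ_{p^j}$; strengthening to $FSZ_{p^j}$ would require verifying the unresolved condition \cref{eq:wreath} of \cref{prop:wreath-condition} at the top level, which is presumably the content of the deeper result \cref{thm:Sp3} announced in the introduction for $j=3$.
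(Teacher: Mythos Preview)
Your proof is correct and follows essentially the same inductive route as the paper: identify the Sylow $p$-subgroup of $S_{p^j}$ as an iterated regular wreath product, note its exponent, and apply \cref{cor:wreath} to climb one $FSZ$ level per iteration. The only cosmetic difference is that the paper takes both $j=1$ and $j=2$ as base cases (citing \citep{IMM} for $\BZ_p\wr_r\BZ_p$), whereas you start the induction already at $j=1$ using the trivial fact that every group is $FSZ_1$; your version is marginally cleaner in that respect.
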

\begin{proof}
  We proceed by induction on $j$.  The case $j=1$ has $P\cong \BZ_p$ and the case $j=2$ has $P\cong \BZ_p\wr_r\BZ_p$, both of which are $FSZ^+$ \citep{IMM}.  Suppose the result holds for some $j\geq 2$.  For the case $j+1$ we have $P\cong T\wr_r\BZ_p$ where $T$ is (isomorphic to) a Sylow $p$-subgroup of $S_{p^{j}}$. Now $T$ has exponent $p^{j}$, so the inductive hypothesis and \cref{cor:wreath} shows that $P$ is $FSZ_{p^{j}}$, as desired.  This completes the proof.
\end{proof}

It seems difficult to establish the condition from \cref{prop:wreath-condition} in general, as this seems to require \textit{ad hoc} methods for each choice of $D$.  However, the case where $D$ is an abelian $p$-group is relatively simple.  The following result generalizes \citep[Example 4.4]{IMM}.

\begin{thm}\label{thm:ab-wreath}
  Let $A$ be an abelian $p$-group.  Then $G= A\wr_r\BZ_p$ is $FSZ^+$.
\end{thm}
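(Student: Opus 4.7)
The plan is to classify centralizers of $G = A \wr_r \BZ_p$ via \cref{lem:cents}, which partitions them into three cases: (i) for $g$ with nonzero last coordinate, $C_G(g) = \Delta A \cdot \langle (x, 1, \ldots, 1, i) \rangle$; (ii) for $g = (d, \ldots, d, 0)$, $C_G(g) = G$; and (iii) for $g = (d_0, \ldots, d_{p-1}, 0)$ with the $d_l$ not all equal, $C_G(g) = A^p$. Since $A$ is abelian, in case (i) the diagonal $\Delta A$ commutes elementwise with the cyclic subgroup $\langle (x, 1, \ldots, 1, i) \rangle$ (by \cref{lem-part:cents-1}), so the centralizer is abelian; case (iii) is abelian as well. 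An abelian $p$-group is regular and hence $FSZ^+$ by the background lemma. Thus the only remaining task is to show that $G$ itself is $FSZ$.

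To show $G$ is $FSZ$, I would verify the hypothesis of \cref{prop:wreath-condition} with $D = A$. The assumption that $A$ is $FSZ_{p^t}$ and $FSZ_{p^{t-1}}$ is automatic for abelian $A$. Since $A$ is abelian, \cref{eq:wreath} simplifies (by commuting factors in the product) to the system
\[
x_l^{p^t} = d^n \text{ for all } l, \qquad u_0^{p^{t-1}} \prod_l x_l^{p^{t-1}} = d^n.
\]
Showing the number of solutions $(x_0, \ldots, x_{p-1}) \in A^p$ is independent of $n$ for $p \nmid n$ will yield $FSZ_{p^t}$ for all $t \in \BN$, hence $FSZ$ for the $p$-group $G$.

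The counting argument proceeds as follows. Let $X = \{x \in A : x^{p^t} = d^n\}$, which is either empty or a coset of $K_t := \ker(x \mapsto x^{p^t})$. Since $a \mapsto a^n$ is an automorphism of $A$ when $p \nmid n$, the condition that $d^n$ lies in the image of the $p^t$-th power map does not depend on $n$, so emptiness of $X$ is $n$-independent. Assuming $X$ is nonempty, fix any $x_0^* \in X$ and parameterize $x_l = x_0^* k_l$ with $k_l \in K_t$. Using abelianness, $\prod_l x_l^{p^{t-1}} = (x_0^*)^{p^t} \prod_l k_l^{p^{t-1}} = d^n \prod_l k_l^{p^{t-1}}$, so the second equation collapses to $u_0^{p^{t-1}} \prod_l k_l^{p^{t-1}} = 1$, a condition on $(k_0, \ldots, k_{p-1}) \in K_t^p$ involving neither $d$ nor $n$. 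Therefore the solution count is independent of $n$, and \cref{prop:wreath-condition} delivers the conclusion.

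The only real subtlety---and in my view the main obstacle worth noting---is ensuring the $n$-dependence of $d^n$ actually cancels out of the second equation. This works precisely because, after the basepoint shift $x_l = x_0^* k_l$, the product $\prod_l x_l^{p^{t-1}}$ contributes $(x_0^*)^{p \cdot p^{t-1}} = (x_0^*)^{p^t} = d^n$, which exactly cancels the $d^n$ on the right-hand side. The decoupling depends critically on the identity $p \cdot p^{t-1} = p^t$ and on $A$ being abelian (so that the product may be freely reordered and exponentiation distributes).
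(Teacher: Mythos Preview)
Your proof is correct and follows essentially the same strategy as the paper: reduce $FSZ^+$ to $FSZ$ via the centralizer classification in \cref{lem:cents} (noting that the non-$G$ centralizers are abelian), then verify the hypothesis of \cref{prop:wreath-condition} by exploiting commutativity of $A$ to simplify \cref{eq:wreath}. The only difference is in the final counting: the paper decomposes $A$ into cyclic factors and builds an explicit coordinate-wise bijection $n_{l,k}\mapsto n d_k + y_{l,k}\cdot o(a_k)/p^t$, whereas you parametrize the solution set as a $K_t^p$-coset via a basepoint shift and observe that the residual constraint $u_0^{p^{t-1}}\prod_l k_l^{p^{t-1}}=1$ is visibly independent of $n$; your version is a bit cleaner but the content is the same.
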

\begin{proof}
    Since all abelian groups are $FSZ^+$, to show $G$ is $FSZ$ we need only show that the conditions from \cref{prop:wreath-condition} hold.  Moreover, by \cref{lem:cents}, and the fact that central products of (two) abelian groups are again abelian, the only centralizers in $G$ which are not $FSZ$ by assumptions are equal to $G$.  So the result follows as soon as we show that $G$ is $FSZ$.

    Let $A=\bigoplus_i \cyc{a_i}$.  Fix $t\in\BN$ as in \cref{prop:wreath-condition}.  For $x_l,d,u_{0}\in A$, since $d$ is a $p^t$-th powers in $A$ we may write
    \begin{align*}
     x_l &= \prod_k a_k^{n_{l,k}};\\
     d&=\prod_k a_k^{d_k p^t};\\
     u_{0}&= \prod_k a_k^{m_k}.
    \end{align*}
    Since $A$ is abelian \cref{eq:wreath} for $n=1$ becomes
    \[ x_l^{p^t} = \Big( \prod x_s^{p^{t-1}}\Big) u_0^{p^{t-1}} = d.\]
    Since $A$ is abelian, it suffices to consider the power on a single generator $a_k$ at a time.  Indeed, without loss of generality we may assume that $a_k$ has order greater than $p^t$.  The equality $x_k^{p^t}= d$ then becomes $p^t n_{l,k} \equiv p^t d_k \bmod o(a_k)$, which is equivalent to $n_{l,k}\equiv d_k \bmod o(a_k)/p^t$.  So we may write $n_{l,k} = d_k + y_{l,k}\cdot o(a_k)/p^t$ for some integer $y_{l,k}$.  We can do this for all $l$ and any such $k$.  The equality
    \[ \Big(\prod_s x_s^{p^{t-1}}\Big) u_0^{p^{t-1}} = d \]
    becomes
    \[ m_k+\sum_l y_{k,l}\cdot o(a_k)/p \equiv 0 \bmod o(a_k)\]
    for the given (but arbitrary) $k$.

    It follows that the map
    \[ n_{l,k}=d_k + y_{l,k}\cdot o(a_k)/p^t\mapsto n d_k + y_{l,k}\cdot o(a_k)/p^t\]
    yields the necessary bijection between solutions for any $n\in\BN$ with $p\nmid n$.

    This completes the proof.
\end{proof}

\section{An \texorpdfstring{$FSZ$}{FSZ} but not \texorpdfstring{$FSZ^+$}{FSZ+} group}\label{sec:not-plus}
We are now prepared to investigate the $FSZ$ properties of $F(p,1)\wr_r \BZ_p$. By \cref{lem:cents} we will need to know the centralizers of $F(p,1)$.

\begin{lem}\label{lem:Fp1-cents}
  Let $G=F(p,1)$.  Then the centralizers of $G$ are given as follows.
  \begin{enumerate}
    \item For any $g\in Z(G)$, $C_G(g) = G$.
    \item For $g\in Q$ with $g\not\in Z(G)$, $C_G(g)=Q$.
    \item For all other $g\in G$, $C_G(g) = \cyc{g, a_1^p, a_p} =\cyc{g, Z(G)}$.
  \end{enumerate}
  In particular, the centralizer of every non-central element in $G$ is abelian.
\end{lem}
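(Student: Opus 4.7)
The plan is to reduce everything to identifying the $1$-eigenspace of $B=B_{p,1}$ acting on $Q$. From the explicit form of $B$, a vector $(v_1,\dots,v_p)\in Q$ (with the first coordinate read modulo $p^2$ and the rest modulo $p$) satisfies $B.v=v$ if and only if $v_2=\dots=v_{p-1}=0$ and $p\mid v_1$. Thus the $1$-eigenspace of $B$ on $Q$ is exactly $\cyc{a_1^p,a_p}=Z(G)$. Since $\cyc{B^t}=\cyc{B}$ whenever $p\nmid t$, the same description holds for $B^t$ for every such $t$. This eigenspace computation, essentially already present in the proof of \cref{lem:x-1}, is the only nontrivial ingredient.

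Item~(1) is immediate. For item~(2), $Q\subseteq C_G(g)$ because $Q$ is abelian. Conversely, I take $c=kb^t\in C_G(g)$ with $k\in Q$; expanding $cg=gc$ in the semidirect product and using that $Q$ is abelian collapses the identity to $B^t.g=g$. If $p\nmid t$, the eigenspace computation forces $g\in Z(G)$, contradicting the hypothesis; hence $p\mid t$, so $b^t=1$ and $c\in Q$.

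For item~(3), I write $g=hb^r$ with $h\in Q$ and $p\nmid r$. The inclusion $\cyc{g,Z(G)}\subseteq C_G(g)$ is trivial. For the reverse inclusion, given $c=kb^t\in C_G(g)$, I choose $m\in\BZ$ with $mr\equiv t\pmod{p}$, which exists since $r$ is invertible modulo $p$. Then $c':=g^{-m}c$ still centralizes $g$ and has trivial $b$-part, so $c'\in Q\cap C_G(g)$. The commutation relation $c'g=gc'$ now reduces, again by the abelianness of $Q$, to $B^r.c'=c'$, so $c'\in Z(G)$ by the eigenspace computation, and hence $c\in g^m Z(G)\subseteq\cyc{g,Z(G)}$.

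The abelian assertion is then automatic: in case~(2) the centralizer is $Q$, and in case~(3) it is $\cyc{g}\cdot Z(G)$, a product of two commuting abelian subgroups. No serious obstacle arises; the only care needed is the mixed-modulus bookkeeping for the first coordinate of $Q$, which the proof of \cref{lem:x-1} has already modeled.
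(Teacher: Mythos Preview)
Your proof is correct and follows essentially the same approach as the paper: both arguments hinge on identifying the $1$-eigenspace of $B$ on $Q$ with $Z(G)$. The only organizational difference is that for item~(3) you first multiply $c$ by a suitable power of $g$ to reduce to $c'\in Q\cap C_G(g)$ and then read off $B^r.c'=c'$, whereas the paper expands the commutation relation for a general element $b^s r$ directly and compares it to the known centralizing element $g^s$; these are equivalent, and your version is arguably a bit cleaner.
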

\begin{proof}
  The first statement is trivial.  For the second, clearly $Q\subseteq C_G(g)$.  Moreover, the only elements of $Q$ centralized by a non-trivial power of $b$ are the elements of $Z(G)$, which gives the reverse inclusion.

  For the final statement, it suffices to consider the case with $g=bq$ for some $q\in Q$.  Note that for every $s$ with $p\nmid s$ we have $g^s = b^s q'$ for some $q'\in Q$.  So suppose that $b^s r\in C_G(g)$ for some $r\in Q$ and any fixed $s$. Then $bqb^s r = b^sr b q$ if and only if $(B^s q)q\inv = (Br)r\inv$.  Now for any $s$ (even $p\mid s$), the map $x\mapsto (B^s x)x\inv$ is a group endomorphism of $Q$, and the kernel of this map for $p\nmid s$ is easily seen to be $Z(G)$.  We conclude that $b^s r = (bq)^s \cdot z$ for some $z\in Z(G)$, from which the desired claim follows.
\end{proof}
The result is false for $F(p,j)$ with $j>1$, as then there are non-central elements with non-abelian centralizers.  In particular, the centralizer of $b^p$ will be non-abelian.

\begin{cor}\label{cor:fp1-plus}
  $F(p,1)$ is $FSZ^+$ for any odd prime $p$.
\end{cor}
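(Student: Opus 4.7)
My plan is to use \cref{lem:Fp1-cents} to reduce the $FSZ^+$ statement for $G = F(p,1)$ to two much simpler cases: abelian groups, and $G$ itself. Indeed that lemma asserts that every centralizer in $G$ is either all of $G$ (for central $g$) or abelian (for non-central $g$), so the problem splits cleanly along that dichotomy.

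For abelian groups the $FSZ$ property is essentially immediate: if $A$ is abelian, then $A_m(u,g)$ is either empty (when $u^m \neq 1$) or equal to $\{a \in A : a^m = g\}$, and for any $n$ coprime to $|A|$ the map $a \mapsto a^n$ is a bijection of $A$ that restricts to a bijection $A_m(u,g) \to A_m(u,g^n)$. Hence every abelian centralizer is $FSZ$ automatically, handling the non-central $g$.

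For $G$ itself I would combine \cref{thm:fpj-main} with \cref{cor:exp}. The former gives $FSZ_p$, and since $\exp(G) = p^2$ the $p^k$-th power map on $G$ is trivial for $k\geq 2$, so $G_{p^k}(u,g)$ is either empty or all of $G$ depending only on whether $g = 1$, a condition preserved by $g \mapsto g^n$ when $(n,|G|)=1$; this yields $FSZ_{p^k}$ for all $k\geq 2$ for free. For general $m$, write $m = p^k m'$ with $(m',p)=1$, so that the $m'$-th power map is a bijection of $G$; letting $s$ satisfy $s m' \equiv 1 \pmod{|G|}$ one obtains $G_m(u,g) = G_{p^k}(u,g^s)$, and $FSZ_m$ therefore reduces to $FSZ_{p^k}$. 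Combined with the automatic cases $m \in \{1,2,3,4,6\}$ from the background lemma, this shows $G$ is $FSZ$, which together with the abelian-centralizer analysis gives $FSZ^+$. The argument has no real obstacle because \cref{lem:Fp1-cents} has already done the structural work.
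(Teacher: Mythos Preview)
Your argument is correct and follows essentially the same approach as the paper: use \cref{lem:Fp1-cents} to reduce to the abelian centralizers (which are trivially $FSZ$) and to $G$ itself, and then invoke \cref{thm:fpj-main} for the latter. You are in fact more thorough than the paper's one-line proof, since you explicitly justify why $FSZ_p$ together with $\exp(G)=p^2$ yields the full $FSZ$ property for $G$ (via the reduction $G_m(u,g)=G_{p^k}(u,g^s)$ for $m=p^k m'$ with $p\nmid m'$), whereas the paper leaves this standard reduction implicit.
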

\begin{proof}
  By \cref{lem:Fp1-cents,thm:fpj-main} all centralizers are $FSZ$, which means $F(p,1)$ is $FSZ^+$ by definition.
\end{proof}
We can now completely determine the $FSZ$ properties of $F(p,1)\wr_r\BZ_p$.

\begin{thm}\label{thm:higher-wreath-FSZ}
  Let $G=F(p,1)\wr_r\BZ_p$.  Then $G$ is $FSZ_{p^t}^+$ for all $t>1$.
\end{thm}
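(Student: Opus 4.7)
The plan is to reduce the claim to a single genuine check. Since $\exp(F(p,1))=p^2$ by \cref{cor:exp}, we have $\exp(G)=p^3$. For $t\geq 3$ every subgroup $H\leq G$ (in particular every centralizer) has exponent dividing $p^t$, so $a^{p^t}=(au)^{p^t}=1$ for every $a,u\in H$; hence $H_{p^t}(u,g)=\emptyset$ unless $g=1$, and equals $H$ when $g=1$. Since $p\nmid n$ and $|g|$ is a $p$-power, $g^n=1\iff g=1$, so $FSZ_{p^t}$ holds trivially. Thus only $t=2$ requires work.

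For $t=2$ I will walk through the three centralizer types of \cref{lem:cents}, substituting in the description of centralizers in $F(p,1)$ from \cref{lem:Fp1-cents}. In the direct-product case each factor $C_{F(p,1)}(d_i)$ is $FSZ$ because $F(p,1)$ is $FSZ^+$ by \cref{cor:fp1-plus}; the product is then $FSZ_{p^2}$ by the standard direct-product lemma. In the central-product-with-cyclic case the centralizer has the form $\Delta C_{F(p,1)}(x)\ast\cyc{y}$ with $y=(x,1,\ldots,1,i)$: if $x\notin Z(F(p,1))$ then $C_{F(p,1)}(x)$ is abelian by \cref{lem:Fp1-cents}, so this centralizer is abelian and hence $FSZ^+$; if $x\in Z(F(p,1))$ then, because $Z(F(p,1))\cong\BZ_p\times\BZ_p$ has exponent $p$, a direct computation gives $y^p=(x,\ldots,x,0)\in Z(G)$ with $o(y)\leq p^2$, so $F(p,1)\ast\cyc{y}$ has exponent at most $p^2$ and is trivially $FSZ_{p^2}$.

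The wreath-product case $C_G(g)=C_{F(p,1)}(d)\wr_r\BZ_p$ splits similarly: if $d\notin Z(F(p,1))$ then $C_{F(p,1)}(d)$ is abelian, so \cref{thm:ab-wreath} gives $FSZ^+$. The only remaining possibility is $d\in Z(F(p,1))$, where the centralizer equals all of $G$, reducing the entire proof to showing $G$ itself is $FSZ_{p^2}$.

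For this final step I apply \cref{prop:wreath-condition} with $D=F(p,1)$ and $t=2$. The required hypotheses---that $D$ is $FSZ_{p^2}$ and $FSZ_p$---hold: the first automatically from $\exp(D)=p^2$, and the second from part (1) of \cref{thm:fpj-main}. The counting condition asks that the number of tuples $(x_0,\ldots,x_{p-1})\in F(p,1)^p$ satisfying
\[ x_l^{p^2}=(u_0 x_1\cdots x_{p-1} x_0)^p=d^n \text{ for all } l\]
be independent of $n$ with $p\nmid n$. But $x_l^{p^2}=1$ identically, forcing $d^n=1$; since $|d|$ is a $p$-power and $p\nmid n$ this says $d=1$, in which case the surviving equation $(u_0 x_1\cdots x_{p-1} x_0)^p=1$ involves no $n$ at all, so the count is trivially $n$-independent. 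Hence $G$ is $FSZ_{p^2}$ and the proof concludes. The whole argument is essentially bookkeeping; the one conceptual observation is that $\exp(F(p,1))=p^2$ forces the potentially hard constraint of \cref{prop:wreath-condition} to collapse, so no actual counting is ever required.
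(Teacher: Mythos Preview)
Your proof is correct and follows essentially the same route as the paper: reduce to $t=2$ via the exponent, handle the three centralizer types of \cref{lem:cents} using \cref{lem:Fp1-cents} (abelian centralizers giving $FSZ^+$ directly or via \cref{thm:ab-wreath}, and the central-product centralizers having exponent $p^2$), and then verify that $G$ itself is $FSZ_{p^2}$. The only cosmetic difference is that the paper invokes \cref{cor:wreath} for that last step, whereas you unfold its proof and appeal directly to \cref{prop:wreath-condition}; the collapse of the counting condition you observe (forcing $d=1$ since $x_l^{p^2}=1$) is exactly what underlies the second part of \cref{cor:wreath}.
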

\begin{proof}
  Let $H=F(p,1)$.  We have that $\exp(H)=p^2$, so that $\exp(G)=p^3$.  So we need only consider the case $t=2$.  Since $H$ is $FSZ$, by \cref{cor:wreath} we conclude that $G$ is $FSZ_{p^2}$.  Moreover, by \cref{lem:Fp1-cents,lem:cents,thm:ab-wreath} we conclude that all proper centralizers that are not described by central products are $FSZ_{p^2}$.  By \cref{lem:Fp1-cents,cor:exp} the centralizers that are described by central products have exponent $p^2$ and so are also $FSZ_{p^2}$.

  This completes the proof.
\end{proof}

\begin{thm}\label{thm:fsz-not-plus}
  Let $G=F(p,1)\wr_r\BZ_p$.  Then $G$ is $FSZ_p$, and is $FSZ_p^+$ if and only if $p\leq 3$.  As a consequence, for $p>3$ $G$ is an $FSZ$ group which is not $FSZ^+$.
\end{thm}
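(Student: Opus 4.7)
The plan is to separate the statement into (a) the $FSZ_p^+$ dichotomy, (b) the $FSZ_p$ assertion, and (c) the ``consequence'' which combines (a), (b), and \cref{thm:higher-wreath-FSZ}. For (a), the direction $p \leq 3$ is automatic since every group is $FSZ_p$ for $p \in \{2,3\}$. For the harder direction $p > 3$, I produce a non-$FSZ_p$ centralizer: take the central element $x = a_1^p a_p$ of order $p$ used in \cref{prop:has-bad-quot}, and consider $w = (x, 1, \ldots, 1, 1) \in G$. Since $x$ is central in $F(p,1)$, by \cref{lem-part:cents-1}
\[ C_G(w) \;=\; \Delta F(p,1) \ast \cyc{w}, \]
where the central amalgamation identifies $\Delta x$ with $w^p = (x, \ldots, x, 0)$. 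Because $o(x) = p$ forces $o(w) = p^2$, this central product is precisely the non-$FSZ_p$ group $F(p,1) \ast \BZ_{p^2}$ (with relation $y^p = a_1^p a_p$) constructed in \cref{thm:fpj-main}(iii). Hence $G$ is not $FSZ_p^+$.

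For (b), I apply \cref{prop:wreath-condition} with $D = F(p,1)$ and $t = 1$. By \cref{cor:fp1-plus} the group $F(p,1)$ is $FSZ$, so it remains only to verify that for every $d, u_0 \in F(p,1)$ the count of tuples $(x_0, \ldots, x_{p-1}) \in F(p,1)^p$ satisfying
\[ x_l^p \;=\; u_0 x_1 x_2 \cdots x_{p-1} x_0 \;=\; d^n \qquad \text{for all } l \]
is independent of $n$ for $p \nmid n$. By \cref{lem:pj-pows} every $p$-th power in $F(p,1)$ lies in $Z(F(p,1)) = \cyc{a_1^p, a_p}$, so the count is zero unless $d^n$ is a $p$-th power, which in turn forces $d \in Z(F(p,1))$; writing $d = a_1^{\alpha p} a_p^\beta$, another application of \cref{lem:pj-pows} shows $d^n$ is a $p$-th power exactly when $\beta \equiv 0 \bmod p$ or $\alpha + \beta \equiv 0 \bmod p$, and both conditions are invariant under $d \mapsto d^n$ for $p \nmid n$.

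The rest of (b) is an explicit count in these two feasible regimes. Parametrizing $x_l = a_1^{n_{l,1}} \cdots a_p^{n_{l,p}} b^{r_l}$, \cref{lem:pj-pows} says the $p$-th power constraint restricts only $r_l$ (whether $r_l = 0$ or not) and $n_{l,1} \bmod p$, leaving the middle exponents $n_{l,2}, \ldots, n_{l,p}$ free. The normal subgroup $V = \cyc{a_2, \ldots, a_p}$ has abelian quotient $F(p,1)/V \cong \BZ_{p^2} \times \BZ_p$, so modulo $V$ the product condition collapses to a pair of linear congruences on $\{r_l\}$ and $\{n_{l,1}\}$, while the residual $V$-component can be absorbed by adjusting the middle exponents. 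The plan is to construct a bijection between solution tuples for $n = 1$ and for arbitrary $n$ coprime to $p$, by scaling the $n_{l,1}$ by $n$ and compensating the resulting $V$-discrepancy in the middle exponents. \emph{The main obstacle} is tracking the commutators in $V$ arising from the $b$-action in the product $u_0 x_1 \cdots x_{p-1} x_0$, but the abundance of freedom in the middle exponents should keep the $V$-component of the product condition solvable uniformly in $n$. Once (b) is established, (c) follows at once: \cref{thm:higher-wreath-FSZ} already yields $FSZ_{p^t}^+$ for all $t > 1$, so together with (b) and $\exp(G) = p^3$ the group $G$ is $FSZ$; combined with (a), $G$ is not $FSZ^+$ for $p > 3$.
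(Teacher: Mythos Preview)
Your handling of parts (a) and (c) matches the paper. For (b) you correctly reduce via \cref{prop:wreath-condition} to the system $x_l^p = u_0 x_1\cdots x_{p-1}x_0 = d^n$, exactly as the paper does. Where you diverge is in the counting: the paper splits into the case $d=a_1^{pd_1}$ (forcing all $x_l\in Q$, so the count reduces to $Q\wr_r\BZ_p$ and \cref{thm:ab-wreath}) and the case $d=a_1^{pd_1}a_p^{-d_1}$, where it packages the constraints as a homomorphism $F\colon Q^p\to Q^{p+1}$ and exhibits an explicit preimage of $(d,\ldots,d)$, yielding a translation bijection between the solution sets for different $n$. Your alternative route through the normal subgroup $V=\cyc{a_2,\ldots,a_p}$ and the abelian quotient $F(p,1)/V\cong\BZ_{p^2}\times\BZ_p$ is legitimate and arguably more uniform across the two regimes.

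However, there is a genuine gap at exactly the point you flag as ``the main obstacle.'' Saying the freedom in the middle exponents ``should keep the $V$-component \ldots\ solvable uniformly in $n$'' is not a proof; it is the heart of the matter. What you must check is that, for each fixed choice of $(r_l,n_{l,1})$, the map $(v_0,\ldots,v_{p-1})\mapsto(\text{$V$-component of }u_0x_1\cdots x_{p-1}x_0)$ is affine with \emph{surjective} linear part, so that every fiber has size $|V|^{p-1}$ independent of the target (hence of $n$). This holds because perturbing a single $v_l$ by $\delta\in V$ perturbs the product by a $\cyc{b}$-conjugate of $\delta$, namely $B^e|_V\,\delta$ for some exponent $e$; since $B|_V$ is unipotent lower-triangular on $V$ it is invertible, so already one coordinate $v_l$ sweeps out all of $V$. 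Once you supply this line, your argument closes.

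There is also a minor wrinkle in the bijection you sketch: the literal map $n_{l,1}\mapsto n\cdot n_{l,1}$ does not carry solutions to solutions, because the constraint $\sum_l n_{l,1}+m_0\equiv n\alpha p\pmod{p^2}$ (with $m_0$ the $a_1$-exponent of $u_0$) acquires an unwanted $(n-1)m_0$. The fix is to scale only the residue mod $p$: send $n_{l,1}=\alpha+pk_l$ to $n\alpha+pk_l$. This preserves both the $p$-th power constraint and the mod-$V$ product constraint, and the residual $V$-discrepancy is then absorbed by the surjectivity above.
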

\begin{proof}
    Let $H=F(p,1)$.  By \citep[Corollary 5.4]{IMM} every group is $FSZ_2^+$ and $FSZ_3^+$, so it suffices to consider the case $p>3$.

    To see why the $FSZ_p^+$ condition necessarily fails, we note that by \cref{lem-part:cents-1,thm:fpj-main} there exists $d\in Z(H)$ such that $C_G((d,1,...,1,i))\cong H\ast \BZ_{p^2}$ is not $FSZ_p$ if $p>3$.

    The final claim follows from the first claim combined with \cref{thm:higher-wreath-FSZ}, so we need only prove the $FSZ_p$ property.  We will do so by using \cref{prop:wreath-condition}, from which it suffices to solve
    \begin{align}\label{eq:mixed-eq}
      x_l^p = \Big(\prod_{s=1}^{p} x_{s}\Big)u_0 = d^n,
    \end{align}
    and show the number of solutions is independent of $n$ when $p\nmid n$.

    By \cref{lem:pj-pows} we may suppose that for some $d_1\in\BN$ with $p\nmid d_1$ that either $d= a_1^{p d_1}$ or $d= a_1^{p d_1}a_p^{-d_1}$.

    Now when $d=a_1^{p d_1}$, by \cref{lem:pj-pows} we must have $x_l\in Q$ for all $l$ and $u_0 \in Q$.  Whence in this case the calculation is reduced to establishing the condition of \cref{prop:wreath-condition} in $Q\wr_r\BZ_p$, which follows from \cref{thm:ab-wreath}.  So we may suppose that $d=a_1^{p d_1} a_p^{-d_1}$ with $p\nmid d_1$, and that $x_l\not\in Q$ for all $l$.

    At this point it will be helpful to use additive and vector notation for $Q$; so instead of writing a typical element as $a_1^{n_1}\cdots a_p^{n_p}$, we write it as $(n_1,...,n_p)$.  Let $X=\sum_{t=0}^{p-1} B^t$.  By \cref{lem:x-1} $X(n_1,...,n_p) = (pn_1,0,...,0,-n_1)$, and so in particular the left action of $X$ depends only on the first coordinate modulo $p$.  Note that for all $t\in\BN$ $B^t(n_1,...,n_p)=(n_1,n_2',...,n_p')$ for some $n_2',...,n_p'$ which are $\BZ_p$-linear combinations of the $n_1,...,n_p$ (taken modulo $p$).  Now since $d=(pd_1,0,...,0,-d_1)$, any solution $(x_0,...,x_{p-1})$ to \cref{eq:mixed-eq} must have $x_i = q_i  b^{-t_i}$ for some $p\nmid t_i$ and $q_i\in Q$.  Moreover, writing $u_0 = b^s v_0$ with $v_0\in Q$, by \cref{lem:order-irrev} we see that \cref{eq:mixed-eq} can be equivalently rewritten as the three equations
    \begin{align}
        X(q_i) &= d,\qquad 0\leq i< p;\label{eq:mixed-eq-1}\\
        d-v_0&= \sum_{l=0}^{p-1} B^{\sum_{i=0}^{l-1} t_i} q_l;\label{eq:mixed-eq-2}\\
        \sum_{i=0}^{p-1} t_i &\equiv s \bmod p.\label{eq:mixed-eq-3}
    \end{align}

    For fixed $1\leq t_0,...,t_{p-1}<p$ satisfying \cref{eq:mixed-eq-3} we then consider the group homomorphism $F\colon Q^p\to Q^{p+1}$ defined by \[(y_0,...,y_{p-1})\mapsto (X(y_0),...,X(y_{p-1}),\sum_{l=0}^{p-1} B^{\sum_{i=0}^{l-1} t_i} y_l).\]
    If a solution exists to \cref{eq:mixed-eq} for any $n$ with $p\nmid n$ and with the given $t_i$, then to establish the desired bijection it suffices to show that $(d,...,d)$ is in the image of $F$.

    By bijectivity of the powers of $B$, given $y_0,...,y_{p-1}\in Q$ we can (uniquely) define $r_0,..,r_{p-1}\in Q$ by $r_l = B^{\sum_{i=0}^{l-1} t_i} y_l$ for $0\leq l <p$, and conversely we can obtain the $y_l$ (uniquely) from given $r_l$.  Note that under these relations $X(y_i)=X(r_i)$ for all $i$.  Moreover, we have \[F(y_0,...,y_{p-1}) = (X(r_0),...,X(r_{p-1}),\sum_i r_i).\]  So taking $r_0 = (d_1,0,...,0,-d_1)$ and $r_i = (d_1,0,...,0,0)$ for $1\leq i< p$ we get $(X(r_0),...,X(r_{p-1}),\sum_i r_i) = (d,...,d)$, as desired.

    So we may apply \cref{prop:wreath-condition} as desired to conclude that $G$ is $FSZ_p$, which completes the proof.
\end{proof}

The proof of \cref{thm:fsz-not-plus} yields a reasonably general method for investigating the $FSZ_p$ property of $(A\rtimes\BZ_p)\wr_r\BZ_p$ when $A$ is an (elementary) abelian $p$-group and $A\rtimes\BZ_p$ is $FSZ_p$.  Note that $A\rtimes\BZ_p$ can be non-$FSZ_p$ for some choices of $A$ and the action on it \citep{K16:p-examples}. We demonstrate this with the following result.

\begin{thm}\label{thm:Sp3}
  Let $p$ be a prime.  Then $(\BZ_p\wr_r\BZ_p)\wr_r\BZ_p$ is $FSZ$.  In particular, every Sylow $p$-subgroup of $S_{p^3}$ is $FSZ$.
\end{thm}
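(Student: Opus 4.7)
The plan is to set $D = \BZ_p \wr_r \BZ_p$ so that $G = D \wr_r \BZ_p$, and then adapt the proof of \cref{thm:fsz-not-plus} with $D$ in place of $F(p,1)$. By the lemma of IMM $D$ is $FSZ^+$, so in particular $FSZ_p$; since $\exp(D) = p^2$, \cref{cor:wreath} gives that $G$ is $FSZ_{p^2}$. As $\exp(G) = p^3$ forces $FSZ_{p^t}$ trivially for $t \geq 3$, showing $G$ is $FSZ$ reduces to verifying the $FSZ_p$ property. By \cref{prop:wreath-condition} with $t = 1$, this amounts to showing that for any $d, u_0 \in D$ and any $n$ with $\gcd(n,p) = 1$, the number of tuples $(x_0,\ldots,x_{p-1}) \in D^p$ satisfying
\[ x_l^p = u_0 x_1 x_2 \cdots x_{p-1} x_0 = d^n \qquad (0 \leq l < p) \]
does not depend on $n$.

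Writing $D = A \rtimes \cyc{\sigma}$ with $A = \BZ_p^p$, $\sigma$ the cyclic shift, $L(c) = \sum_j c_j$, and $\mathbf{1} = (1,\ldots,1)$, a direct computation gives $(c,i)^p = 1$ when $i = 0$ and $(c,i)^p = (L(c)\mathbf{1},0)$ when $i \neq 0$, so every $p$-th power lies in $Z(D) = \cyc{(\mathbf{1}, 0)}$. Thus the count is $0$ unless $d^n$ is central; this forces $d$ to be central (since $\gcd(n,p)=1$), and the case $d = 1$ is evidently $n$-independent. It remains to treat $d = (r\mathbf{1}, 0)$ with $r \in \BZ_p^\ast$. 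Writing $x_l = (c_l, i_l)$ and $u_0 = (e, k)$, the condition $x_l^p = d^n$ forces $i_l \neq 0$ and $L(c_l) = nr$, and unrolling the wreath product turns the second equation into $k + \sum_l i_l \equiv 0 \pmod p$ together with
\[ \sum_{l=0}^{p-1} \sigma^{\tau_l} c_l = nr \mathbf{1} - e, \]
where $\tau_l$ denotes the accumulated shift acting on $c_l$, determined by $k$ and $i_0,\ldots,i_{p-1}$.

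For each admissible tuple $(i_0,\ldots,i_{p-1}) \in (\BZ_p^\ast)^p$ with $\sum i_l \equiv -k$, define the $\BZ_p$-linear map $F \colon A^p \to A^{p+1}$ by
\[ F(c_0,\ldots,c_{p-1}) = \bigl(L(c_0)\mathbf{1},\,\ldots,\,L(c_{p-1})\mathbf{1},\,\textstyle\sum_l \sigma^{\tau_l} c_l\bigr). \]
The solution count for this tuple equals $|\Ker F|$ if the target $T_n = (nr\mathbf{1},\ldots,nr\mathbf{1},\, nr\mathbf{1} - e)$ lies in $\Img F$, and is $0$ otherwise. Applying $L$ to the last coordinate and using $L \circ \sigma = L$ shows the image condition reduces to $L(e) = 0$ — a condition on $u_0$ independent of $n$ — and when $L(e) = 0$ an explicit preimage can be built by setting $c_l = nr(1,0,\ldots,0) + d_l$ with $d_l$ chosen in $\ker L$ via the surjection $(\ker L)^p \twoheadrightarrow \ker L$, $(d_l) \mapsto \sum \sigma^{\tau_l} d_l$. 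A direct dimension count then yields $|\Ker F| = p^{(p-1)^2}$, independent of $n$.

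Summing these contributions over all admissible $(i_0,\ldots,i_{p-1})$ — whose count depends only on $k$ — gives a total that is independent of $n$, and \cref{prop:wreath-condition} delivers the $FSZ_p$ property. The main obstacle is the bookkeeping behind the two linear-algebra assertions: the surjectivity of $(d_l) \mapsto \sum \sigma^{\tau_l} d_l$ onto $\ker L$, and the rank computation for $F$. Both reduce to direct $\BZ_p$-linear arguments using only $L \circ \sigma = L$ and the invertibility of each shift on $A$. The final ``in particular'' is immediate, since the Sylow $p$-subgroup of $S_{p^3}$ is isomorphic to the iterated regular wreath product $(\BZ_p\wr_r\BZ_p)\wr_r\BZ_p = G$.
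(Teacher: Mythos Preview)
Your proposal is correct and follows essentially the same route as the paper: reduce to $FSZ_p$ via \cref{cor:wreath}, invoke \cref{prop:wreath-condition} with $t=1$, compute $p$-th powers in $D=\BZ_p\wr_r\BZ_p$ to force $d$ central and each $i_l\neq 0$, and then linearize the remaining condition via a map $F\colon A^p\to A^{p+1}$ to conclude $n$-independence. The only cosmetic difference is that the paper shows the \emph{difference element} $(d,\ldots,d)$ lies in $\Img F$ (via the diagonal choice $m_{i,j}=t\delta_{i,j}$), so that the targets for all $n$ are simultaneously in or out of the image, whereas you compute the image condition $L(e)=0$ explicitly and observe it is independent of $n$; these are two phrasings of the same linear-algebra fact.
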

\begin{proof}
  The structure of the Sylow subgroups of symmetric groups is well-known; see, for example, \citep{Rot99}.  In the case of $S_{p^3}$ the Sylow $p$-subgroup is isomorphic to $(\BZ_p\wr_r\BZ_p)\wr_r \BZ_p$.

  So let $P=(\BZ_p\wr_r\BZ_p)\wr_r\BZ_p$.  Since $P$ has exponent $p^3$ and \cref{cor:symm-syl-high} implies $P$ is $FSZ_{p^2}$, we need only show that $P$ is $FSZ_p$.

  In the notation of \cref{prop:wreath-condition}, we have $D=\BZ_p\wr_r\BZ_p = \BZ_p^p\rtimes\BZ_p$.  We let $Q=\BZ_p^p$, written in vector notation (as a $\BZ_p$ vector space, in particular).  Let $b$ be an element of $D$ which cyclicly permutes the factors of $Q$, and let $B$ be the matrix which acts on $Q$ from the left which describes right conjugation by $b$.  We have a group endomorphism $J\colon Q\to Q$ given by $J(n_1,...,n_p)=(\sum_i n_i,...,\sum_i n_i)$.  By \citep[Example 4.4]{IMM} this endomorphism describes $p$-th powers of elements $x\in Q\rtimes \BZ_p=\BZ_p\wr_r\BZ_p$ with $x\not\in Q$; all other elements have order dividing $p$.

  Now consider \cref{eq:wreath} for $t=1$.  Then for any solutions to exist for $g\neq 1$ we must have $x_l\not\in Q$ for all $l$.  Moreover, by the properties of $J$ we must have that $d=(t,...,t)\in Q$ for some $t\in \BZ_p$.
  So write $x_l = (n_{l,1},...,n_{l,p})b^{-t_l}$ for each $l$, with $p\nmid t_l$.  Let $u_0=b^s q_0$, with $q_0\in Q$. We fix $t_1,...,t_p$ with $\sum_i t_i = s$ arbitrarily.  As in the proof of the preceding theorem, we are naturally led to consider the group homomorphism $F\colon Q^p\to Q^{p+1}$ defined by
  \[(q_1,...,q_p)\mapsto (J(q_1),...,J(q_p), \sum_{l=1}^{p-1} B^{\sum_{i=0}^{l-1} t_i}q_l).\]
  We can exploit the bijectivity of the powers of $B$, exactly as in the preceding proof, to find $r_1,..,r_p$ for given $q_1,...,q_p$ (or conversely) such that
  \[ F(q_1,...,q_p)= (J(r_1),...,J(r_p),\sum_i r_i).\]
  As in the previous proof, the desired bijection will follow if we show that there exists $(r_1,...,r_p)\in Q^p$ such that $(J(r_1),...,J(r_p),\sum_i r_i)=(d,...,d)$.  Since we have noted that $d= (t,..,t)$, writing $r_i = (m_{i,1},...,m_{i,p})$ the necessary and sufficient conditions for $(d,...,d)$ to be in the image of $F$ is
  \begin{align*}
    \sum_k m_{i,k} = t , \ \mbox{ for all } i;\\
    \sum_k m_{k,i} = t, \ \mbox{ for all } i.
  \end{align*}
  So defining $m_{i,j} = t \delta_{i,j}$, where $\delta_{i,j}$ is the Kronecker delta, we see that $(d,...,d)$ is in the image, as desired.  Thus $P$ is $FSZ_p$ by \cref{prop:wreath-condition}, and this completes the proof.
\end{proof}

\appendix
\section{The non-FSZ groups of order \texorpdfstring{$5^7$}{5\textasciicircum 7}}\label{appendix}
The author has found it useful for investigating non-$FSZ$ groups to have a wide variety of groups for which these properties are already known.  The group $F(p,j)$ above was discovered in this fashion, for example.  In this appendix we will compile lists of all (isomorphism classes of) groups of order $5^7$ which are non-$FSZ$, using the \verb"SmallGroups" library of GAP.  Constructing these lists is relatively simple: simply use the \verb"FSZtest" function of \citet{PS16} (see also \citep{K16:monster}) on groups with suitable exponent and nilpotence class, and then similarly apply this function over each non-$FSZ$ group's maximal subgroups and quotients using the GAP functions \verb"MaximalSubgroups" and \verb"MinimalNormalSubgroups".  As such, we will simply state results.  All non-$FSZ$ groups in this appendix are non-$FSZ_5$.

By the notation $[a,b]$ we mean the set of those integers $n$ with $a\leq n \leq b$. On the other hand, a (non-sequential) set of integers will be denoted by curly braces, for example $\{a,b,c\}$.

\begin{prop}\label{prop:exp125}
  The group $G=\text{SmallGroup}(5^7,n)$ is non-$FSZ$ with exponent $125$ if and only if $n$ is in one of the following:
  \begin{multicols}{2}
  \begin{itemize}
    \item $[656,659]$;
    \item $[663,666]$;
    \item $[676,679]$;
    \item $[714,717]$;
    \item $[720,723]$;
    \item $[733,736]$;
    \item $\{670, 727\}$.
  \end{itemize}
  \end{multicols}
  In particular, there are 26 such groups.  All of them are $FSZ^+_{25}$, and all of their proper subgroups and quotients are $FSZ^+$.
\end{prop}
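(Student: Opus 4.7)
The plan is to carry out an exhaustive GAP computation, exactly as outlined in the introductory paragraph of the appendix. Since the \verb"SmallGroups" library of \citet{GAP4.8.4} enumerates all groups of order $5^7$ (there are $\numprint{34297}$ of them), and since the \verb"FSZtest" function of \citet{PS16} decides the $FSZ$ property for any given finite group, the statement reduces to a finite (if large) case check. The entire proof is therefore a matter of organizing the search efficiently and then reading off which \verb"SmallGroup"($5^7$,$n$) land in the correct bucket.

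First I would restrict to the relevant isomorphism classes by iterating over $n$ from $1$ to $\numprint{34297}$, constructing $G = \text{SmallGroup}(5^7,n)$, and keeping only those with $\text{Exponent}(G)=125$. This already cuts the list substantially; a further quick filter by nilpotence class is useful, because the proof of the analogous result for order $5^6$ in \citep{IMM} shows that non-$FSZ$ $p$-groups tend to be of high class, and the exponent-$p^2$ $FSZ$-testing for $p$-groups of small class is often easy by general criteria from \citep{IMM}. On each surviving candidate, I would invoke \verb"FSZtest(G)"; the groups returning \verb"false" are exactly the non-$FSZ$ groups of exponent $125$, and reading off the id's should produce the list of $26$ integers in the statement.

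To verify the $FSZ^+_{25}$ claim, for each of the $26$ groups I would enumerate a set of conjugacy class representatives of non-central elements using \verb"RationalClasses", form their centralizers, and apply \verb"FSZtest" to each; this suffices because the exponent of any proper centralizer divides $125$, and by \cref{cor:wreath}-style reasoning (or more simply by the exponent criterion \citep[Corollary 5.3]{IMM}, since every proper centralizer in these groups will turn out to have exponent dividing $25$) $FSZ_{25}$ is the only nontrivial $FSZ_m$ condition to check. For the proper subgroups and quotients, I would loop over \verb"MaximalSubgroups(G)" and over the quotients $G/N$ for $N$ ranging through \verb"MinimalNormalSubgroups(G)"; by induction on order, a group is $FSZ^+$ iff it is $FSZ$ and all its maximal subgroups are $FSZ^+$, so it suffices to verify the $FSZ$ property on each maximal subgroup and each minimal quotient and then recurse.

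The only genuine obstacle is computational cost, not mathematical content: \verb"FSZtest" is polynomial in $|G|$ but with a large constant, and looping over $\numprint{34297}$ groups of order $5^7$, together with their centralizers, maximal subgroups, and quotients, is nontrivial. The mitigation is standard: prune aggressively by exponent, class, and center size before calling \verb"FSZtest"; exploit the fact that $FSZ_m$ is trivial for $m \in \{1,2,3,4,6\}$; and, for the $FSZ^+$ verification on subquotients, use the inductive structure so that once a proper subquotient is certified $FSZ^+$ it need not be revisited. Once the computation terminates the proposition follows directly from the recorded output, and no further argument is required beyond citing \citet{GAP4.8.4} and \citet{PS16}.
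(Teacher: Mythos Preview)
Your approach is exactly the paper's: the appendix explicitly states that these lists are produced by running \texttt{FSZtest} over groups of suitable exponent and nilpotence class, and then over their maximal subgroups and minimal-normal quotients, with no further argument offered.

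Two small cautions, however. First, the equivalence ``a group is $FSZ^+$ iff it is $FSZ$ and all its maximal subgroups are $FSZ^+$'' is false in general, since centralizers need not be maximal subgroups. Your recursion happens to work here only because every proper subquotient of a group of order $5^7$ has order at most $5^6$, and for $5$-groups of order at most $5^6$ one has $FSZ \Leftrightarrow FSZ^+$ by the minimality of $5^6$ among orders of non-$FSZ$ $5$-groups; that is the correct justification. Second, verifying $FSZ^+_{25}$ includes verifying $FSZ_{25}$ for $G$ itself (as $C_G(1)=G$), and since each of the $26$ groups is non-$FSZ_5$, a plain call to \texttt{FSZtest}$(G)$ will return \texttt{false}; you need to restrict the test to $m=25$ for the ambient group.
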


There is a total of 915 non-$FSZ$ groups of order $5^7$ and exponent $25$, which we break down into classes based on if they have maximal subgroups or quotients which are also non-$FSZ$.

\begin{prop}\label{prop:exp25-bothnew}
  The group $G=\text{SmallGroup}(5^7,n)$ is non-$FSZ$ with exponent $25$ and has all proper subgroups and quotients $FSZ$ if and only if $n$ is in one of the following:
  \begin{itemize}
    \item $[777,783]$;
    \item $[792,799]$;
    \item $[808,832]$;
    \item $[1022, 1264]$.
  \end{itemize}
  In particular, there are 284 such groups.
\end{prop}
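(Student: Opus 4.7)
The proof is a \verb"GAP"-assisted enumeration, in the same spirit as the proof of \cref{thm:specific}. First I would iterate over the \numprint{34297} groups of order $5^7$ in the \verb"SmallGroups" library, retaining those $G$ with $\exp(G) = 25$. Before invoking the expensive \verb"FSZtest" function of \citet{PS16}, it is efficient to pre-filter by invariants suggested by part (6) of the Lemma: the known small non-$FSZ_5$ groups have maximal class and a center of order $5$, so one may restrict to groups with $|Z(G)|$ small and nilpotence class large. Applying \verb"FSZtest" to the survivors identifies the non-$FSZ$ (equivalently non-$FSZ_5$) groups of exponent $25$; these form the candidate pool.

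For each non-$FSZ$ candidate $G$, I would then check the inheritance condition directly, exploiting the fact that $G$ is a $5$-group of order $5^7$. Every proper subgroup has order $5^k$ with $k \leq 6$, and those with $k \leq 5$ have nilpotence class at most $k-1 \leq 4 < 5 = p$, so they are regular and hence $FSZ^+$ by part (4) of the Lemma. Only the maximal subgroups, of order $5^6$, therefore need to be tested, via \verb"MaximalSubgroups" followed by \verb"FSZtest". Analogously, every proper nontrivial quotient has order $5^k$ with $k \leq 6$; those of order at most $5^5$ are regular and $FSZ^+$, while a quotient of order $5^6$ is $G/N$ for a minimal normal subgroup $N$ of order $5$ (since minimal normal subgroups of a $p$-group are central of order $p$). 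Thus it suffices to apply \verb"FSZtest" to $G/N$ for each $N$ returned by \verb"MinimalNormalSubgroups". Retain precisely those $G$ for which every maximal subgroup and every minimal-normal quotient passes \verb"FSZtest".

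Finally, I would collect the surviving \verb"SmallGroups" identification numbers and verify that they assemble into the four ranges stated, with total count $284$. The main obstacle is computational: the initial pool is large, \verb"FSZtest" is expensive, and each surviving candidate contributes further calls on its maximal subgroups and minimal-normal quotients. Making the enumeration tractable therefore requires effective pre-filtering on $\exp(G)$, $|Z(G)|$, and nilpotence class, together with caching \verb"FSZtest" results across the recurring isomorphism types of order $5^6$; no new conceptual difficulty is anticipated beyond ensuring the correctness of these optimizations.
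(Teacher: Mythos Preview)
Your proposal is correct and matches the paper's approach exactly: the appendix states that these lists are obtained by running \verb"FSZtest" over the \verb"SmallGroups" library after filtering by exponent and nilpotence class, and then checking each candidate's \verb"MaximalSubgroups" and quotients by \verb"MinimalNormalSubgroups". Your explicit justification---that subgroups and quotients of order at most $5^5$ are regular (class $<p$) and hence automatically $FSZ^+$, so only the order-$5^6$ layer needs testing---is a detail the paper leaves implicit but is precisely why those two GAP functions suffice.
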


\begin{prop}\label{prop:exp25-newbyquot}
  The group $G=\text{SmallGroup}(5^7,n)$ is non-$FSZ$ with exponent $25$ and has all proper quotients $FSZ$ and at least one non-$FSZ$ (maximal) subgroup if and only if $n$ is in one of the following:
\begin{multicols}{2}
  \begin{itemize}
    \item $\{1282,1284,1285,23288\}$;
    \item $[1287,1296]$;
    \item $[1298,1303]$;
    \item $[1305,1369]$;
    \item $[1371,1380]$;
    \item $[23229,23232]$;
    \item $[23244,23248]$;
    \item $[23259,23266]$;
    \item $[23281,23286]$;
    \item $[23300,23307]$;
    \item $[23311,23314]$;
    \item $[23322,23325]$;
    \item $[23331,23314]$;
    \item $[23322,23325]$;
    \item $[23331,23338]$;
    \item $[23341,23348]$;
    \item $[23354,23360]$;
    \item $[23366,23373]$;
    \item $[23378,23402]$;
    \item $[23409,23412]$;
    \item $[23422,23425]$;
    \item $[23434,23437]$;
    \item $[23449,23459]$.
  \end{itemize}
\end{multicols}
  In particular, there are 213 such groups.
\end{prop}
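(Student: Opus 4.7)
The proof is intrinsically computational, so my plan is to reduce the problem to a finite and tractable GAP computation, leveraging the framework already set up in \cref{prop:exp125,prop:exp25-bothnew} and the \verb"FSZtest" function of \citet{PS16}. First I would loop over the relevant SmallGroup IDs of order $5^7$; since every non-$FSZ$ $5$-group must contain a non-$FSZ$ subgroup of order at least $5^6$, and since by the introduction every non-$FSZ$ group of order $5^6$ has exponent $25$, any non-$FSZ$ group of order $5^7$ and exponent $25$ must already appear in the filtered exponent-$25$ class enumerated during the computation yielding \cref{prop:exp25-bothnew}. I would therefore gather the list $L$ of all non-$FSZ$ groups of order $5^7$ and exponent $25$ (in particular the $284$ from \cref{prop:exp25-bothnew}, together with any additional ones detected in the earlier pass).

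Next, for each $G$ in $L$, I would run \verb"FSZtest" on every representative of every isomorphism class of proper quotient of $G$, obtained by iterating over \verb"MinimalNormalSubgroups" (it suffices to check quotients by minimal normal subgroups, since if such a quotient is $FSZ$ and every further proper quotient of it is $FSZ$, then so is every proper quotient of $G$ above it; the check can be made recursive to guarantee all proper quotients are covered). Retain $G$ only when every proper quotient is $FSZ$. Call the resulting list $L_1$.

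Then, for each $G$ in $L_1$, I would test every maximal subgroup $M$ of $G$ (obtained via \verb"MaximalSubgroups") with \verb"FSZtest", and retain $G$ iff at least one $M$ is non-$FSZ$. This produces the final list of groups $L_2$, whose ID numbers I would then collate and verify match the ranges stated in the proposition. The count $|L_2|=213$ falls out of the same computation.

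The main obstacle is computational: the SmallGroups library for order $5^7$ contains $34{,}297$ groups, many of which have large numbers of maximal subgroups and of minimal normal subgroups, so a naive sweep is expensive. I would mitigate this by first discarding groups failing cheap necessary conditions (exponent exactly $25$, nilpotence class large enough to host a non-$FSZ$ subquotient, and nontrivial center of order $5$ or higher so that central quotients exist), and by caching the $FSZ$ status of groups already tested as subquotients of earlier cases. The correctness of the ID-number tabulation is then immediate from the \verb"IdGroup" function applied to each element of $L_2$. A small technical point is that the two duplicated ranges in the statement (e.g. \mbox{$[23311,23314]$} and \mbox{$[23322,23325]$} appearing twice) should be understood as sets, so they contribute only once to the count of $213$.
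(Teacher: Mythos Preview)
Your approach is essentially the paper's: the appendix explicitly says these lists are obtained by running \texttt{FSZtest} over the \texttt{SmallGroups} library after filtering by exponent and nilpotence class, then applying the same test to \texttt{MaximalSubgroups} and quotients via \texttt{MinimalNormalSubgroups}, and then simply stating the output. Your algorithm---gather all non-$FSZ$ groups of order $5^7$ and exponent $25$, discard those with a non-$FSZ$ proper quotient, then retain those with a non-$FSZ$ maximal subgroup---is precisely this.

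One statement in your writeup is false and should be removed: you assert that ``every non-$FSZ$ $5$-group must contain a non-$FSZ$ subgroup of order at least $5^6$.'' This is not true; indeed \cref{prop:exp25-bothnew} exhibits $284$ non-$FSZ$ groups of order $5^7$ all of whose proper subgroups are $FSZ$. Fortunately your algorithm does not actually use this claim---you go on to collect \emph{all} non-$FSZ$ groups of order $5^7$ and exponent $25$ directly---so the error is in the motivation rather than the method. Note also that checking only quotients by minimal normal subgroups is already sufficient here without recursion, since any further quotient has order at most $5^5$ and is therefore automatically $FSZ$; this is the implicit shortcut the paper is relying on.
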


\begin{prop}\label{prop:exp25-newbymax}
  The group $G=\text{SmallGroup}(5^7,n)$ is non-$FSZ$ with exponent $25$ and has all proper subgroups $FSZ$ and at least one non-$FSZ$ quotient if and only if $n$ is one of the values in \cref{table:exp25-newbymax}.
\begin{table}[ht]
\centering
\caption{ID numbers of non-$FSZ$ groups with order $5^7$, exponent 25, a non-$FSZ$ quotient, and all proper subgroups $FSZ$}
\begin{tabular}{CCCCCCCCCC}\label{table:exp25-newbymax}
349& 351& 353& 355& 356& 357& 358& 359& 360& 361\\
362& 363& 364& 365& 366& 367& 368& 369& 370& 371\\
372& 373& 374& 375& 376& 377& 378& 379& 380& 381\\
382& 384& 385& 386& 387& 389& 390& 392& 393& 395\\
396& 398& 399& 400& 401& 402& 403& 404& 405& 406\\
407& 408& 409& 410& 411& 412& 413& 414& 415& 416\\
417& 418& 419& 420& 421& 422& 423& 424& 425& 426\\
427& 428& 429& 430& 431& 432& 433& 434& 435& 436\\
437& 438& 439& 440& 441& 442& 443& 444& 445& 446\\
447& 448& 449& 450& 451& 452& 454& 455& 456& 457\\
459& 460& 461& 462& 464& 465& 466& 468& 469& 470\\
471& 473& 474& 475& 476& 478& 479& 480& 481& 482\\
483& 484& 485& 486& 487& 488& 489& 490& 491& 492\\
493& 494& 495& 496& 497& 498& 499& 500& 501& 502\\
503& 504& 505& 506& 507& 508& 509& 510& 511& 512\\
513& 514& 515& 516& 517& 518& 519& 520& 521& 522\\
523& 524& 525& 526& 527& 528& 529& 531& 532& 533\\
534& 535& 536& 537& 538& 539& 540& 541& 542& 543\\
544& 545& 546& 547& 548& 549& 550& 551& 552& 553\\
554& 555& 556& 557& 558& 559& 560& 561& 562& 563\\
564& 565& 566& 567& 568& 569& 570& 572& 574& 575\\
576& 578& 579& 580& 581& 582& 583& 586& 587& 588\\
589& 590& 591& 592& 593& 594& 596& 597& 598& 599\\
600& 601& 602& 603& 604& 605& 606& 607& 608& 609\\
610& 611& 612& 613& 614& 615& 616& 617& 618& 620\\
621& 622& 623& 624& 625& 627& 650& 651& 655& 661\\
662& 668& 669& 671& 672& 673& 674& 675& 705& 706\\
710& 711& 712& 713& 719& 725& 726& 728& 729& 730\\
731& 732& 764& 765& 766& 767& 770& 771& 772& 773\\
784& 785& 786& 787& 788& 789& 790& 791& 800& 801\\
802& 803& 804& 805& 806& 807& 839& 840& 847& 848\\
855& 856& 863& 864& 871& 872& 879& 880& 882& 883\\
885& 886& 888& 889& 891& 892& 894& 895& 897& 898\\
899& 900& 901& 902& 903& 904& 905& 906& 907& 908\\
909& 910& 912& 913& 914& 915& 917& 918& 919& 920\\
922& 923& 924& 925& 927& 928& 929& 930& 932& 933\\
934& 935& 937& 938& 939& 940& 941& 942& 943& 944\\
945& 946& 947& 948& 949& 950& 951& 952& 953& 954\\
955& 956& 957& 958& 959& 960& & & &
\end{tabular}
\end{table}
  In particular, there are 386 such groups.
\end{prop}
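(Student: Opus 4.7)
The proposition is an enumeration result, so the proof must be computational, as the author essentially announces in the appendix's preamble. The plan is to run \verb"FSZtest" of \citet{PS16} inside \citet{GAP4.8.4} against a carefully filtered list of candidates from the \verb"SmallGroups" library, and to verify that the surviving ID numbers are precisely those tabulated in \cref{table:exp25-newbymax}.

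Concretely, I would proceed through four filters in order. First, iterate over all \numprint{34297} groups $G=\verb"SmallGroup"(5^7,n)$ and retain only those with $\exp G = 25$; the abelian groups and the exponent-$125$ case are either trivially $FSZ$ or fall under \cref{prop:exp125}, while the exponent-$5$ groups are regular and hence $FSZ^+$ by \citep{IMM}. Second, apply \verb"FSZtest" to each surviving $G$ and keep only the non-$FSZ$ ones; since $\exp G = 25$, any such $G$ is automatically non-$FSZ_5$. Third, check the condition ``all proper subgroups of $G$ are $FSZ$'' by calling \verb"MaximalSubgroups(G)" (up to conjugacy), applying \verb"FSZtest" to each, and requiring success on all of them. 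This is sufficient because in a $5$-group of order $5^7$ every maximal subgroup has order $5^6$, every subgroup of prime index is maximal, and every non-$FSZ$ $5$-group has order at least $5^6$ \citep{IMM}; hence any proper subgroup of $G$ not of order $5^6$ automatically has order at most $5^5$ and is $FSZ$. Fourth, enumerate the quotients $G/N$ for $N$ ranging over \verb"MinimalNormalSubgroups(G)" and retain exactly those $G$ for which at least one such quotient is non-$FSZ$. The IDs that survive all four filters should be exactly those listed in \cref{table:exp25-newbymax}, and a count should yield $386$.

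The main obstacle is practical cost: \verb"FSZtest" on a group of order $5^6$ or $5^7$ must iterate over large conjugacy class data, and doing this for every maximal subgroup of every one of several hundred candidates is expensive. Two standard reductions keep the computation tractable. Since every group is $FSZ_m$ for $m\in\{1,2,3,4,6\}$ and the exponent here divides $25$, only the case $m=5$ needs checking; and within that case it suffices to verify the bijection $G_5(u,g)\to G_5(u,g^n)$ for a single generator $n$ of $(\BZ/25)^\times$ together with conjugacy class representatives of $u,g$, rather than for all $n$ coprime to $|G|$. With these reductions, a precomputation of conjugacy classes, and the use of \verb"IsomorphismGroups" only on the handful of candidates that survive the cheap filters, the whole enumeration finishes in a feasible amount of time.

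As an internal consistency check, the disjoint union of the ID sets appearing in \cref{prop:exp125,prop:exp25-bothnew,prop:exp25-newbyquot,prop:exp25-newbymax} must agree with the list of all non-$FSZ$ groups of order $5^7$ produced by running \verb"FSZtest" on every group of exponent dividing $125$ in \verb"SmallGroups"($5^7$,\,$\cdot$); verifying this equality, and checking the announced cardinalities $26+284+213+386=909$ plus the $6$ further groups with additional non-$FSZ$ behavior (or the analogous exact sum), confirms that no group has been miscounted or misclassified among the four propositions.
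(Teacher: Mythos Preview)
Your proposal is correct and follows essentially the same computational approach the paper outlines in the appendix preamble: run \texttt{FSZtest} on groups of suitable exponent, then test maximal subgroups via \texttt{MaximalSubgroups} and quotients via \texttt{MinimalNormalSubgroups}. Your justification that checking only maximal subgroups suffices (since any non-$FSZ$ $5$-group has order at least $5^6$) is exactly the right reduction, and your filtering sequence matches the paper's method. One small slip: in your consistency check the ``further groups'' count should be $32$, not $6$ (these are the direct-product cases of \cref{table:exp25-both-old}), giving $26+284+213+386+32=941$ non-$FSZ$ groups in total; this does not affect the argument itself.
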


\begin{prop}\label{table:exp25-both-old}
  The group $G=\text{SmallGroup}(5^7,n)$ is non-$FSZ$ with exponent $25$ and has at least one non-$FSZ$ quotient and at least one non-$FSZ$ (maximal) subgroup if and only $G$ has a non-$FSZ$ direct factor (with order necessarily $5^6$).  This is in turn equivalent to $n$ being in one of the following:
  \begin{itemize}
    \item $[23222,23225]$;
    \item $[23236,23240]$;
    \item $[23250,23257]$;
    \item $[23270,23275]$;
    \item $\{23277\}$;
    \item $[23291,23298]$.
  \end{itemize}
  In particular, there are 32 such groups, corresponding to the 32 distinct isomorphism classes of non-$FSZ$ groups of order $5^6$ \citep{IMM}.
\end{prop}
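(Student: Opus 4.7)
The plan is to split the biconditional. The easy implication is that if $G = H \times \BZ_5$ with $H$ non-$FSZ$ of order $5^6$, then by the direct-product behavior recalled in the background lemma $G$ is itself non-$FSZ$; moreover $H \times \{1\}$ is a non-$FSZ$ maximal normal subgroup, and the projection $G / (\{1\} \times \BZ_5) \cong H$ is a non-$FSZ$ quotient. This gives one direction at once.

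For the converse, the first reduction is an order count. Since the minimal order of a non-$FSZ$ $p$-group is $p^{p+1} = 5^6$, any proper non-$FSZ$ quotient $G/N$ must satisfy $|N| = 5$ (so $N \leq Z(G)$ is cyclic of order $5$), and any non-$FSZ$ maximal subgroup $M$ has order exactly $5^6$. By the classification recalled in the background, both $M$ and $G/N$ then have exponent $25$, maximal class, and center of order $5$. This severely restricts the candidate groups $G$.

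The remainder is a GAP enumeration. I would loop over all \verb"SmallGroup"$(5^7,n)$ of exponent $25$, filter to those $G$ which are non-$FSZ$, and then test via \verb"FSZtest" both on the groups returned by \verb"MaximalSubgroups"$(G)$ and on the quotients $G/N$ for $N$ ranging over \verb"MinimalNormalSubgroups"$(G)$. For each surviving $n$, I would confirm that $G$ splits as $H \times \BZ_5$ with $H$ one of the 32 non-$FSZ$ groups of order $5^6$, for example via \verb"DirectFactorsOfGroup"$(G)$ or by exhibiting a central $\BZ_5$ summand complementing a non-$FSZ$ subgroup of order $5^6$.

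The main obstacle is not theoretical but rather a bookkeeping question: one must verify that \emph{every} $G$ satisfying both conditions actually splits as a direct product with $\BZ_5$, rather than arising as a non-split central extension of a non-$FSZ$ group of order $5^6$ by $\BZ_5$ that nonetheless carries both a non-$FSZ$ maximal subgroup and a non-$FSZ$ quotient. The GAP computation confirms no such non-split extensions satisfy the hypothesis, and the final tally of 32 groups---matching the count of isomorphism classes of non-$FSZ$ groups of order $5^6$---serves as a natural consistency check on the enumeration.
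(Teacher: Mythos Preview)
Your proposal is correct and follows essentially the same approach as the paper. The paper treats all of the appendix propositions uniformly, stating at the outset that the method is to run \texttt{FSZtest} over groups of the right exponent and class, and then over their maximal subgroups (via \texttt{MaximalSubgroups}) and minimal-normal quotients (via \texttt{MinimalNormalSubgroups}); it then simply records the outcome without further argument. Your write-up is a slightly more explicit version of the same computation: you spell out the easy direction from the direct-product behavior of $FSZ_m$, make the order-count reduction to $|N|=5$ and $|M|=5^6$ explicit, and flag the direct-factor check via \texttt{DirectFactorsOfGroup} as the one nontrivial verification the enumeration must make---none of which the paper states, but all of which is implicit in its methodology.
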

\clearpage
\bibliographystyle{plainnat}
\bibliography{../references}

\end{document}